\documentclass[12pt]{article}
\usepackage{arydshln}
\usepackage{amsmath,amsthm,amsfonts,amssymb,amscd,caption,color,subcaption,cite}
\usepackage{graphicx}
\usepackage[mathscr]{eucal}
\allowdisplaybreaks[4]
\usepackage[figurename=Fig.]{caption}
\numberwithin{equation}{section}
\allowdisplaybreaks[4]

\newtheorem {Lemma}{Lemma}[section]
\newtheorem {Theorem} {Theorem}[section]
\newtheorem {Corollary}{Corollary}[section]

\newtheorem {Claim} {Claim}[section]
\usepackage{tikz}
\usetikzlibrary{positioning}
\usepackage{circuitikz}
\usetikzlibrary{patterns}
\usepackage{fullpage}

\begin{document}

\title{The maximum index and spectral radius of unbalanced signed multipartite graphs}

\author{Yiting Cai\footnote{E-mail: yitingcai@m.scnu.edu.cn},  Bo Zhou\footnote{Corresponding author; E-mail: zhoubo@m.scnu.edu.cn}\\
School of  Mathematical Sciences, South China Normal University,\\
Guangzhou 510631, P.R. China
}

\date{}
\maketitle

\begin{abstract}
Let $\Gamma=(G,\sigma)$ be a signed graph, where $G$ is the underlying graph with vertex set $V(G)$ and edge set $E(G)$ such that $\sigma: E(G)\to \{-1,1\}$ is the sign function.  For $U\subset V(G)$,  the operation that changes the sign of all edges between $U$ and $V(G)\setminus U$ is 
called switching. Two signed graphs with the same underlying graph  are switching equivalent if one  is obtainable from  the other one by switching a subset. Two signed graphs are switching isomorphic
if one  is isomorphic to a switching equivalent signed graph of the other one. A signed cycle is called negative if it contains an odd number of negative edges. A signed graph is balanced if none of its cycles is negative; otherwise it is unbalanced. The adjacency matrix $A(\Gamma)$ of $\Gamma$ is obtained from the standard $(0,1)$-adjacency matrix of $G$ by reversing the sign of all $1$s which correspond to negative edges.
The index of $\Gamma$ is the largest eigenvalue of  $A(\Gamma)$ and the spectral radius of $\Gamma$ is  the largest absolute value of the eigenvalue of  $A(\Gamma)$. The least eigenvalue of $\Gamma$ is the least eigenvalue of $A(\Gamma)$.  We study the extremal problems of the index and the spectral radius among unbalanced signed multipartite graphs. More precisely,  we determine the unbalanced signed $t$-partite graphs with fixed $t\ge 2$ and partite sizes (order, respectively) that maximizes the index and the spectral radius respectively, up to switching isomorphism. To determine the unbalanced signed multipartite graphs with fixed partite sizes (order, respectively) with maximum spectral radius, we also determine those with minimum least eigenvalue.
\end{abstract}

\section{Introduction}

Signed graphs were introduced by Harary in 1953 in relation to certain problems in social psychology \cite{Ha}. 
Let $G$ be a simple graph with vertex set $V(G)$ and edge set $E(G)$.
A signed graph $\Gamma=(G, \sigma)$ is a graph $G$ with a function $\sigma$ defined from  $E(G)$ to $\{-1,1\}$, in which $G$ and $\sigma$ are called the underlying graph and the sign function (or signature) of $\Gamma$, respectively. 

For a signed graph $\Gamma=(G,\sigma)$, if $U\subset V(G)$, then switching  $U$ means that reversing the signs of all edges between vertices $u$ and $v$ with  $u\in U$ and $v\in V(G)\setminus U$. 
Two signed graphs  are switching equivalent if they share the same underlying graph, and one can be obtained from the other by switching  some vertex subset. 
Two signed graphs are isomorphic if there exists an isomorphism between their underlying graphs that preserves the signs of the edges. 
Two signed graphs $\Gamma_1$ and $\Gamma_2$  are switching isomorphic, denoted by $\Gamma_1 \simeq \Gamma_2$, if  $\Gamma_1$ is isomorphic to a signed graph that is switching equivalent to $\Gamma_2$. 
For a signed graph $\Gamma=(G, \sigma)$ with $e\in E(G)$, if $\sigma(e)=1$ ($\sigma(e)=-1$, respectively), then the edge $e$ is positive (negative, respectively). If each edge of a signed graph $(G,\sigma)$ is positive (negative, respectively), then it is denoted by $G^+$ ($G^-$, respectively). An ordinary graph $G$ is also viewed as $G^+$. A cycle in $\Gamma$ is positive (negative, respectively) if the number of its negative edges is even (odd, respectively).
A signed graph is called balanced (unbalanced, respectively) if it contains no (at least one, respectively) negative cycle \cite{Ha,HK,Z2}.
It follows by \cite[Lemma 5.3]{Z2} that a signed graph $(G,\sigma)$ is balanced if and
only if it is switching equivalent to $G^+$. For a signed graph $\Gamma=(G,\sigma)$, if $G$ possesses some property, we also say $\Gamma$ possesses this property. For example, $\Gamma$ is bipartite means that $G$ is bipartite.

An independent set of a graph is a subset of vertices that are pairwise nonadjacent.
Let $t$ be an integer with $t\ge2$. A graph $G$ is $t$-partite if $V(G)$ can be partitioned into $t$ independent sets,  which are called the partite sets. The cardinality of a partite set is called the partite size. 
Denote by $K_{n_1,\dots,n_t}$ the complete $t$-partite graph with partite sizes  $n_1,\dots,n_t$. Moreover, if $n_1+\dots+n_t=n$ and $|n_i-n_j|\le1$ for any $1\le i< j\le t$, then it is known as the Tur\'{a}n graph, denoted by $T_{n,t}$.

The adjacency matrix  of a signed graph $\Gamma=(G, \sigma)$ of order $n$ is defined to be the $n\times n$ matrix $A(\Gamma)=(a_{uv})_{u,v\in V(G)}$, where
\[
a_{uv}=\begin{cases} \sigma(uv) & \mbox{if $uv\in E(G)$},\\
0 & \mbox{otherwise}.
\end{cases}
\]
The eigenvalues of $\Gamma$ are just the eigenvalues of $A(\Gamma)$, which we order as  $\lambda_1(\Gamma)\ge \dots\ge\lambda_n(\Gamma)$. Many aspects of signed graphs have been studied, see, e.g. \cite{AL,StanB} (and references therein).

In this article, we study the index and spectral radius of signed graphs. Given a signed graph $\Gamma$ of order $n$, the index of $\Gamma$ is the the largest eigenvalue $\lambda_1(\Gamma)$,  and the spectral radius $\rho(\Gamma)$ of $\Gamma$ is the largest absolute value of the eigenvalues of $\Gamma$. As \[
\rho(\Gamma)=\max\{\lambda_1(\Gamma), -\lambda_n(\Gamma)\}, 
\] 
the least eigenvalue  $\lambda_n(\Gamma)$  of $\Gamma$ is also studied.

For a signed graph $\Gamma=(G,\sigma)$, let $\Gamma'$ be the signed graph obtained from $\Gamma$ by switching  $U\subset V(G)$. Then $A(\Gamma')$ is obtained  by multipliing the rows and columns 
of $A(\Gamma)$ corresponding to $U$ by $-1$, so  switching equivalent signed graphs have similar adjacency matrices, and share the same spectrum.

Let $-\Gamma$ be the signed graph obtained from $\Gamma$ by reversing the sign of each edge.
If $\Gamma$ and $-\Gamma$ are switching equivalent, then we say $\Gamma$ is sign-symmetric, and in this case,
$\rho(\Gamma)=\lambda_1(\Gamma)=-\lambda_n(\Gamma)$.  For example, 
if $\Gamma=(G,\sigma)$ is an unbalanced bipartite graph, then it is sign-symmetric, so $\rho(\Gamma)=\lambda_1(\Gamma)$. 

For an ordinary graph $H$ of order $n$, $\lambda_i(H)=\lambda_i(H^+)$, and  its index $\lambda_1(H)$ coincides with its spectral radius $\rho(H)$. 
By 
Lemma \ref{under} below, the index of a signed graph is less than or equal to the index of its underlying graph. 
Both index and spectral radius of signed graphs have been received much attention, see the recent book of Stani\'c \cite{StanB}.  
It is a natural  problem  to determine the signed graphs with maximum index or spectral radius in  classes of unbalanced signed graphs.  From a spectral perspective, there is much difference between ordinary graphs and unblalaced signed graphs.



Among the (signed) $t$-partite graphs with fixed partite sizes $n_1,\dots, n_t$, $K_{n_1, \dots, n_t}^+$ uniquely achieves the maximum index, which is a folklore result following direct from the well known Perron-Frobenius theorem, and the eigenvalues are given in \cite{De, EH}; Among the (signed) $t$-partite graphs with fixed order $n$, $T_{n,t}$ uniquely achieves the maximum index , which was drived explicitly  by Stevanovi\'c,   Gutman and Rehman \cite{SGR}, and was hinted in \cite{Ni} by showing that  if $G$ is a $K_{t+1}$-free graph of order $n$, then $\lambda(G)<\lambda(T_{n,t})$ unless $G=T_{n,t}$.

For positive integers $n_1\ge \dots \ge n_t$ and $t\ge 2$, let $\Gamma_{n_1, \dots, n_t}$  be the unbalanced signed complete  $t$-partite graph with partite sizes $n_1, \dots, n_t$  containing exactly one negative edge, say $u_0v_0$, in which $u_0$ and $v_0$ are in the partite sets of sizes $n_1$ and $n_2$, respectively. Moreover, if   $n_1+\dots+n_t=n$ and $n_1-n_t\le1$, then we write $\Gamma_n(t)$ for
 $\Gamma_{n_1, \dots, n_t}$. 
Recently, Conde, Dratman and Grippo \cite{CDG} proved that among
 unbalanced signed bipartite graphs of order $n \ge 4$, a signed graph maximizes the spectral radius if and only if it is switching isomorphic to $\Gamma_n(2)$.  In this paper, we study the extremal spectral properties of unbalanced signed multipartite graphs. The main results are listed below. 

%



\begin{Theorem}\label{N1}
Let $\Gamma$ be an unbalanced signed $t$-partite graph with partite sizes $n_1, \dots, n_t$, where $n_1\ge \dots\ge n_t$, and $t\ge 2$. Then
\[
\lambda_1(\Gamma)\le \lambda_1\left(\Gamma_{n_1, \dots, n_t}\right)
\]
with equality if and only if $\Gamma\simeq \Gamma_{n_1, \dots, n_t}$, where 
$\lambda_1\left(\Gamma_{n_1, \dots, n_t}\right)$ is the largest root of $\phi_{n_1,\dots, n_t}(x)\Pi_{i=3}^t(x+n_i)=0$ with 
\begin{align*}
\phi_{n_1,\dots, n_t}(x)&=
x^4-n_1n_2x^2+4(n_1-1)(n_2-1)\\
&\quad -\sum_{i=3}^{t}\frac{n_i}{x+n_i}\left((x^2-4)(x+n_1)(x+n_2)+4(2x+n_1+n_2-1)\right).
\end{align*}
\end{Theorem}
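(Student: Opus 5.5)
We prove Theorem~\ref{N1} in three stages: reduce to complete underlying graphs, reduce to a single negative edge, and compute the characteristic polynomial.

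\textbf{Stage 1: the underlying graph may be taken complete $t$-partite.} Let $\Gamma$ maximize $\lambda_1$ among unbalanced signed $t$-partite graphs with partite sizes $n_1,\dots,n_t$. Let $x$ be a unit eigenvector for $\lambda_1(\Gamma)$. Suppose $u,v$ lie in different partite sets and $uv\notin E(G)$.
\begin{itemize}
\item If $x_ux_v\neq 0$, add the edge $uv$ with sign $\operatorname{sgn}(x_ux_v)$. The Rayleigh quotient then does not decrease, and it strictly increases unless $x_ux_v=0$.
\item Adding an edge never destroys unbalancedness, since the negative cycle survives.
\item If $x_ux_v=0$, add $uv$ with either sign. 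Then $\lambda_1$ does not decrease, and strictness comes from the eigenvector no longer satisfying the eigen-equation unless some extra condition holds.
\end{itemize}
So we may assume $G=K_{n_1,\dots,n_t}$. For the uniqueness part we will need strict inequality here; it follows from the standard argument that if $\lambda_1$ is unchanged then $x$ remains an eigenvector, which forces $x_u=x_v=0$. We will handle this degenerate case via the final structure.

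\textbf{Stage 2: reduction to exactly one negative edge.} Up to switching, choose the signature on $K_{n_1,\dots,n_t}$ so that $x\ge 0$: switching the vertices where $x$ is negative leaves $\lambda_1$ unchanged and makes the eigenvector nonnegative. Now for every negative edge $uv$, the quantity $x_ux_v$ wants to be small. The key is to show that an optimal signature has exactly one negative edge after switching.

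Take $\Gamma$ optimal with $x\ge 0$ and let $N$ be its set of negative edges. Since $\Gamma$ is unbalanced, $N\neq\emptyset$. Any negative edge $uv$ with $x_ux_v>0$ could be flipped to positive, strictly increasing the Rayleigh quotient, unless this makes $\Gamma$ balanced. So every negative edge but possibly one "essential" edge has $x_ux_v=0$.

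This is the step I expect to be the main obstacle. Eigenvector entries equal to zero in a complete multipartite graph must be ruled out or absorbed by the eigen-equations, and a cleaner route may be a direct comparison. Let $e$ be a negative edge. Then $\Gamma$ with only $e$ negative, which is $K^+$ minus twice the edge contribution of $e$, satisfies
\[
x^{T}A x = x^{T}A(K^+)x-2\sum_{uv\in N}x_ux_v \le x^{T}A(K^+)x - 2x_ex_{e'}.
\]
Hence keeping only the negative edge $uv$ minimizing $x_ux_v$ gives a larger or equal value. With all edges nonnegative in product except one, this shows the optimum is $\Gamma_K(e)$, the complete graph with a single negative edge $e$, for some edge $e$.

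\textbf{Stage 3: placing the edge and computing the polynomial.} It remains to decide between the edges $u_0v_0$ according to the partite sets $(i,j)$ containing the endpoints. Use the equitable quotient partition: split off $u_0$ and $v_0$ from their parts, so the vertices form the classes $\{u_0\}$, $V_i\setminus\{u_0\}$, $\{v_0\}$, $V_j\setminus\{v_0\}$, and the $V_k$ for $k\neq i,j$.
\begin{itemize}
\item Eigenvectors orthogonal to the quotient give eigenvalues $-n_k$ (parts of size at least 2) and other small values.
\item The quotient matrix yields $\phi$, and clearing denominators gives the factor $\prod_{k\ne i,j}(x+n_k)$.
\item Computing the determinant produces exactly the stated $\phi_{n_1,\dots,n_t}$ when $(i,j)=(1,2)$.
\end{itemize}

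For the comparison, show that $\phi_{(i,j)}(x)-\phi_{(1,2)}(x)$ has a fixed sign for $x$ larger than $\max(n_1,n_2,\dots)$-type bounds. A cleaner alternative is a Rayleigh argument: moving $u_0$ to a larger part with the eigenvector on the larger part being smaller. Intuitively, the negative edge hurts least when its endpoints have small Perron entries, and entries are smaller in larger parts, since $\lambda x_u=\sum_{w\notin V(u)}x_w$ and $x_u$ is decreasing in part size. We will use this monotonicity to conclude that $(i,j)=(1,2)$ is uniquely optimal, up to ties when sizes coincide, which give switching isomorphic graphs.
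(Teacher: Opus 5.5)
Your Stages~1 and~2 are sound as far as they go. They show that the maximum is attained by some complete $t$-partite graph with a single negative edge. (In Stage~2 the change of the quadratic form is $4\sum_{uv\in N}x_ux_v$, not $2\sum$, but the direction is right.) The genuine gap is Stage~3, which carries the real content of the theorem, and the heuristic you propose for it fails. The endpoints of the negative edge carry depressed entries, so ``entries are smaller in larger parts'' does not control the comparison that matters.

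Take $K_{3,2,2}$ with the negative edge $u_0v_0$ joining the two parts of size $2$. Its index $\lambda_1\approx 4.113$ is the largest root of $x^3-14x-12$. The eigenvector has entry $a$ at $u_0,v_0$, entry $a(1+2/\lambda_1)\approx 1.486a$ at the other two vertices of those parts, and entry $4a(\lambda_1+1)/\lambda_1^2\approx 1.209a$ on the part of size $3$. So $x_{u_0}x_{v_0}$ is the smallest product over all edges, and moving the negative edge anywhere decreases the quadratic form at this vector. Yet $\lambda_1(\Gamma_{3,2,2})\approx 4.19$ (the largest root of $x^4-2x^3-12x^2+8x+16$) is larger. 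No single-eigenvector Rayleigh move can detect this.

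The paper instead uses both eigenvectors through the identity $(\lambda_1'-\lambda_1)\mathbf{y}^\top\mathbf{x}=\mathbf{y}^\top(A(\dot\Gamma)-A(\Gamma'))\mathbf{x}$. It writes every entry explicitly in terms of $\lambda_1,\lambda_1'$ and the part sizes. It then uses bounds such as $\lambda_1\ge n_j$ from Lemma~\ref{gnt} to show $(x_{v_0}-x_k)y_{u_0}+(y_j-y_{u_1})x_{u_0}>0$, even though the first summand can be negative. Your alternative, comparing $\phi_{(i,j)}$ with $\phi_{(1,2)}$, is only named. Without one of these carried out, even the inequality is unproved.

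The ``equality if and only if'' part is also missing. Stages~1--2 only exhibit one maximizer of the right shape. To show that every maximizer is switching isomorphic to $\Gamma_{n_1,\dots,n_t}$, you must know where a nonnegative eigenvector of the complete graph with one negative edge can vanish. This is Lemma~\ref{nne}: such an eigenvector is positive except in the two small families $t=2$, $n_j=2$ and $t=3$, $n_j=n_\ell=1$. In those families the only zero is at the endpoint $u_0$. These cases are then closed by noting that any extra negative edge must be incident with $u_0$, and switching $\{u_0\}$ would make the graph balanced.

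That lemma needs a nontrivial sign analysis of the eigen-equations plus a check of small orders, and ``handle via the final structure'' does not supply it. The order of the reductions also matters. The paper flips all but one negative edge and adds all missing edges in a single step, so the comparison graph is already complete with one negative edge and Lemma~\ref{nne} applies. In your Stage~1 the comparison graph carries an arbitrary signature, and nothing is known about the zeros of its eigenvector. (A minor point: vectors summing to zero on each cell of your quotient partition give eigenvalue $0$, not $-n_k$.)
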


\begin{Theorem}\label{N2}
Let $\Gamma$ be an unbalanced signed $t$-partite graph of order $n$ with $t\ge 2$. Then
\[
\lambda_1(\Gamma)\le \lambda_1\left(\Gamma_n(t)\right)
\]
with equality if and only if $\Gamma\simeq \Gamma_n(t)$.
\end{Theorem}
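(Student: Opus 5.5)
By Theorem~\ref{N1}, for any unbalanced signed $t$-partite graph $\Gamma$ of order $n$ with partite sizes $n_1\ge\dots\ge n_t$ we already have $\lambda_1(\Gamma)\le\lambda_1(\Gamma_{n_1,\dots,n_t})$, with equality only if $\Gamma\simeq\Gamma_{n_1,\dots,n_t}$. So it suffices to show that among all $\Gamma_{n_1,\dots,n_t}$ with $n_1+\dots+n_t=n$, the index is uniquely maximized when $n_1-n_t\le 1$. The plan is a balancing argument. Suppose $n_i-n_j\ge2$ for some $i<j$. Moving one vertex from the larger part to the smaller part (then reordering) should strictly increase $\lambda_1$.

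To prove the balancing step, I would use the Rayleigh quotient with a unit eigenvector $\mathbf{x}$ of $A(\Gamma_{n_1,\dots,n_t})$ for $\lambda_1$. By the equitable-like structure, vertices in the same part other than $u_0,v_0$ share an entry: $x_k$ on part $k$ for $k\ge3$, and $a,b$ on the non-special vertices of parts $1,2$, together with $x_{u_0},x_{v_0}$. Switching a suitable set makes all entries with the right sign. Take a non-special vertex $w$ in the larger part $V_i$ and move it to $V_j$. This deletes its edges to $V_j$ and adds edges to $V_i\setminus\{w\}$, all positive. The quadratic form then changes by
\[
2x_w\Big(\sum_{z\in V_i\setminus\{w\}}x_z-\sum_{z\in V_j}x_z\Big).
\]
Via the eigen-equations $\lambda_1 x_k=\sum_{z\notin V_k}x_z$ (with corrections at $u_0,v_0$), we get $(\lambda_1+n_k)x_k=S$, where $S$ is the total sum, for the generic parts $k\ge3$. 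So generic entries of larger parts are smaller, but $n_ix_i-n_jx_j=S\big(\tfrac{n_i}{\lambda_1+n_i}-\tfrac{n_j}{\lambda_1+n_j}\big)>0$. This shows the change is positive, and strict inequality follows since $\mathbf{x}$ would otherwise have to remain an eigenvector of the new matrix, which is contradicted by an eigen-equation at $w$.

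The main obstacle is that the special parts $V_1,V_2$ containing $u_0,v_0$ are not symmetric with the rest. Moving a vertex changes which parts are largest, so the negative edge may need to be relocated to keep the form $\Gamma_{n'_1,\dots}$. I would handle this in two ways. First, I would note that the resulting graph is still unbalanced $t$-partite with one negative edge and then apply Theorem~\ref{N1} again to pass to the canonical $\Gamma$ of the new sizes, which only increases $\lambda_1$. Second, I would treat the cases where $i$ or $j\in\{1,2\}$ separately. For these I would expect to compare the signs and magnitudes of $a,b,x_{u_0},x_{v_0}$ against $S/(\lambda_1+n_k)$, using $\lambda_1>\lambda_1(K_{n_1,\dots,n_t})-2$, or alternatively compare the polynomials $\phi$ directly at $\lambda_1$.

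Alternatively, if the perturbation is messy, I would show directly that replacing $(n_i,n_j)$ by $(n_i-1,n_j+1)$ makes the characteristic function $\phi\prod(x+n_k)$ negative at the old $\lambda_1$. For $n=t$ or other small degenerate cases, such as $t=2$ where Theorem~\ref{N2} reduces to \cite{CDG}, I would check separately.
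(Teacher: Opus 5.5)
\textbf{There is a genuine gap: the balancing step is proved only in the case that never needs it.} Your reduction to the graphs $\Gamma_{n_1,\dots,n_t}$ via Theorem~\ref{N1}, and the plan of moving one vertex into a part at least two smaller, match the paper. But you carry out the balancing step only when neither part contains an endpoint of the negative edge $u_0v_0$.

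\textbf{Why the postponed case is the substance.} Since $u_0v_0$ joins the two largest parts, an imbalance $n_1-n_t\ge 2$ often involves only $V_1$. Examples are sizes $(4,2,2)$ and $(5,3,3,3)$, and every case with $t=2$. So the case you postpone is the content of the theorem.

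\textbf{Your proposed tool fails in that case.} The single-eigenvector Rayleigh comparison breaks down there. For $\Gamma_{3,1,1}$, the eigenvector in Fig.~\ref{f1} has $x_{u_0}=0$, entry $1$ at the other two vertices of $V_1$, and $\lambda_1/2$ on $V_2$ and $V_3$. Moving a non-special vertex of $V_1$ to $V_3$ changes $\mathbf{x}^\top A\mathbf{x}$ by $2(1+0-\lambda_1/2)=2-\lambda_1<0$. Yet the index actually increases, to $\lambda_1(\dot\Gamma_{2,1,2})$. In general, for $t\ge 3$ the change has the sign of $\frac{\lambda_1S}{\lambda_1+n_t}-(\lambda_1+1)x_1$. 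When $n_1-n_t=2$, positivity needs $x_{u_0}>\frac{n_tS}{(\lambda_1+1)(\lambda_1+n_t)}$, and you have no way to supply this bound.

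\textbf{A smaller error in the generic case.} The sum over $V_i\setminus\{w\}$ is $(n_i-1)x_i$, not $n_ix_i$. The correct quantity is $S\,\frac{\lambda_1(n_i-n_j-1)-n_j}{(\lambda_1+n_i)(\lambda_1+n_j)}$, and its positivity needs $\lambda_1\ge n_j$ (available from Lemma~\ref{gnt}) plus a strictness argument.

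\textbf{The missing idea is the paper's two-eigenvector comparison.} It uses the maximality of $\Gamma$ on both sides. Move $u'\in V_1\setminus\{u_0\}$ to $V_t$ to form $\Gamma'$. Let $\mathbf{x},\mathbf{y}$ be non-negative eigenvectors for $\lambda_1=\lambda_1(\Gamma)$ and $\lambda_1'=\lambda_1(\Gamma')\le\lambda_1$, which exist by Lemma~\ref{nne}. Inserting the eigen-equations of both graphs on $V_1$ and $V_t$ gives
\[
0\ge(\lambda_1'-\lambda_1)\mathbf{y}^\top\mathbf{x}=\mathbf{y}^\top\left(A(\Gamma')-A(\Gamma)\right)\mathbf{x}=(\lambda_1'-\lambda_1)x_1y_t+\lambda_1x_ty_t-\lambda_1'x_1y_1 .
\]
The same eigen-equations give $x_t\ge\frac{\lambda_1+n_t+1}{\lambda_1+n_t}x_1$ and $y_1\le\frac{\lambda_1'+n_t+1}{\lambda_1'+n_t}y_t$, simply by discarding $x_{u_0},y_{u_0}\ge 0$. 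Hence the right side is at least $\frac{n_t(\lambda_1-\lambda_1')}{(\lambda_1+n_t)(\lambda_1'+n_t)}x_1y_t\ge 0$, and no lower estimate on $x_{u_0}$ is ever needed.

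Equality forces $n_1=n_t+2$ and $x_{u_0}=y_{u_0}=0$, which Lemma~\ref{nne} excludes. Indeed, $x_{u_0}=0$ forces sizes $(3,1,1)$ for $\Gamma$, but then $\mathbf{y}$, for sizes $(2,1,2)$, must be positive.

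For $t=2$ the paper runs a parallel computation; citing \cite{CDG} there, as you suggest, is legitimate. Without this device, or a fully executed comparison of the polynomials $\phi$ (which you only mention), your proposal does not prove the theorem.
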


%
%

%
%
%

%
%

\begin{Theorem}\label{N5}
Let $\Gamma$ be an unbalanced signed $t$-partite graph with partite sizes $n_1, \dots, n_t$, where $n_1\ge \dots\ge n_t$ and $t\ge 2$. Then
\[
\rho(\Gamma)\le 
\begin{cases}
\rho(\Gamma_{n_1,n_2}) & \mbox{if $t=2$}, \\
\rho(K_{n_1, \dots, n_t}^-) & \mbox{if $t\ge 3$}
\end{cases}
	\]
with equality if and only if 
\[
\Gamma\simeq \begin{cases}
\Gamma_{n_1,n_2} &  \mbox{when $t=2$},\\
K_{n_1, \dots, n_t}^{-} &  \mbox{when $t\ge 3$},
\end{cases}
\]
where
\[
\rho(\Gamma_{n_1,n_2})=\sqrt{\frac{n_1n_2}{2}+\frac{1}{2}\sqrt{n_1^2n_2^2-16(n_1-1)(n_2-1)}},
\]
and for $t\ge 3$, 
$\rho(K_{n_1, \dots, n_t}^-)$ is equal to the largest root of $\left(\sum_{i=1}^t\frac{n_i}{x+n_i}-1\right)\prod_{i=1}^t(x+n_i)=0$.
\end{Theorem}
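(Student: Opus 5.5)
We use $\rho(\Gamma)=\max\{\lambda_1(\Gamma),-\lambda_n(\Gamma)\}$ and treat the two parts separately, since by Theorem \ref{N1} the first part is already settled.

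\textbf{Case $t=2$.} Every unbalanced bipartite signed graph is sign-symmetric, so $\rho(\Gamma)=\lambda_1(\Gamma)$, and Theorem \ref{N1} with $t=2$ gives the bound and the equality case directly. For $t=2$ the sum in $\phi_{n_1,n_2}$ is empty, so $\lambda_1(\Gamma_{n_1,n_2})$ is the largest root of the biquadratic
\[
x^4-n_1n_2x^2+4(n_1-1)(n_2-1)=0 .
\]
Solving for $x^2$ yields the stated closed form.

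\textbf{Case $t\ge 3$: the least eigenvalue.} Here we must bound $-\lambda_n(\Gamma)$. Write $B=-A(\Gamma)$, the adjacency matrix of $-\Gamma$. Then $-\lambda_n(\Gamma)=\lambda_1(-\Gamma)$, and by Lemma \ref{under}
\[
\lambda_1(-\Gamma)\le \lambda_1(G)\le \lambda_1(K_{n_1,\dots,n_t}).
\]
The second inequality is strict unless $G=K_{n_1,\dots,n_t}$. The first is an equality only when $-\Gamma$ is balanced, that is, $\Gamma\simeq G^-$. For $t\ge3$ the complete multipartite graph contains triangles, so $K^-_{n_1,\dots,n_t}$ is unbalanced and is an admissible extremal graph. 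Since $\rho(K^-)=\lambda_1(K)$, we get
\[
-\lambda_n(\Gamma)\le \lambda_1(K_{n_1,\dots,n_t})=-\lambda_n(K^-_{n_1,\dots,n_t}),
\]
with equality if and only if $\Gamma\simeq K^-_{n_1,\dots,n_t}$.

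\textbf{Case $t\ge3$: the index.} For an unbalanced $\Gamma$, Lemma \ref{under} applied to an unbalanced graph gives $\lambda_1(\Gamma)<\lambda_1(G)\le\lambda_1(K_{n_1,\dots,n_t})$. I expect this strict inequality to be either an earlier lemma or provable by Perron–Frobenius: equality would force a Perron vector $x$ with $|x|$ positive and $\sigma(uv)x_ux_v>0$ on all edges, which makes $\Gamma$ balanced. Consequently $\lambda_1(\Gamma)<\rho(K^-)$, so $K^-$ strictly wins on the index side too.

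\textbf{Combining.} We obtain $\rho(\Gamma)\le\lambda_1(K_{n_1,\dots,n_t})=\rho(K^-_{n_1,\dots,n_t})$, with equality only when $-\lambda_n(\Gamma)$ attains it, i.e.\ $\Gamma\simeq K^-$.

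\textbf{The formula.} It remains to identify $\lambda_1(K_{n_1,\dots,n_t})$. With an eigenvector constant $a_i$ on part $i$, we have $\lambda a_i=S-n_ia_i$, where $S=\sum_j n_ja_j$. Hence $a_i=S/(\lambda+n_i)$, and summing $n_ia_i$ gives
\[
\sum_i \frac{n_i}{\lambda+n_i}=1 .
\]
Clearing denominators gives the stated polynomial. The other eigenvalues are $0$ and $-n_i$, which are smaller.

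\textbf{Main obstacle.} The only delicate point is the strictness for unbalanced $\Gamma$ in the index comparison, and the equality characterization in Lemma \ref{under}. If these are not supplied earlier, I would prove them via the Perron-vector sign argument above, using connectivity of the underlying complete multipartite graph; a disconnected $G$ is handled componentwise.
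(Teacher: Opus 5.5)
Your proof is correct. For $t=2$ it is the paper's argument: sign-symmetry plus Theorem~\ref{N1}. For $t\ge 3$ you take a genuinely different and lighter route.

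The paper invokes its two extremal theorems. Theorem~\ref{N1} gives $\lambda_1(\Gamma)\le\lambda_1(\Gamma_{n_1,\dots,n_t})$, and Lemma~\ref{under} on the connected unbalanced $\Gamma_{n_1,\dots,n_t}$ gives $\lambda_1(\Gamma_{n_1,\dots,n_t})<\lambda_1(K^+_{n_1,\dots,n_t})$. Theorem~\ref{N3}, proved via Lemma~\ref{L+} and a Rayleigh-quotient comparison with $K^-_{n_1,\dots,n_t}$, then handles the least eigenvalue and the equality case.

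You instead apply Lemma~\ref{under} directly to $\Gamma$ and to $-\Gamma$, together with strict Perron--Frobenius monotonicity of $\lambda_1$ over proper spanning subgraphs of the connected graph $K_{n_1,\dots,n_t}$. This avoids Theorem~\ref{N1}, and with it the whole nonnegative-eigenvector analysis of Lemma~\ref{nne}. It also reproves Theorem~\ref{N3} in two lines. It shows that for $t\ge3$ the statement is just $\rho(\Gamma)\le\rho(G)\le\rho(K_{n_1,\dots,n_t})$ plus the equality analysis. The hypothesis $t\ge 3$ enters only through the admissibility of $K^-_{n_1,\dots,n_t}$, which is unbalanced because it contains triangles. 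The paper's route buys only economy of exposition, since it proves Theorems~\ref{N1} and \ref{N3} anyway.

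Two small repairs are needed.

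\textbf{The index step.} The claim $\lambda_1(\Gamma)<\lambda_1(G)$ for unbalanced $\Gamma$ can fail when $G$ is disconnected, because a balanced component may carry the index. For example, a negative triangle plus a positive triangle, as a spanning subgraph of $K_{2,2,2}$, gives $\lambda_1(\Gamma)=\lambda_1(G)=2$. Arguing ``componentwise'' does not fix this. However, you only need $\lambda_1(\Gamma)<\lambda_1(K_{n_1,\dots,n_t})$, which follows from the same case split you already used on the least-eigenvalue side:
\begin{itemize}
\item if $G=K_{n_1,\dots,n_t}$, Lemma~\ref{under} gives strict inequality because $\Gamma$ is unbalanced;
\item if $G\ne K_{n_1,\dots,n_t}$, then $\lambda_1(\Gamma)\le\lambda_1(G)<\lambda_1(K_{n_1,\dots,n_t})$.
\end{itemize}

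\textbf{The largest root.} The remaining eigenvalues of $K_{n_1,\dots,n_t}$ are not just $0$ and the $-n_i$. Apart from $0$, they are the other $t-1$ roots of your polynomial, lying in $[-n_1,-n_t]$; a root equals some $-n_i$ only when that size is repeated. To conclude that $\lambda_1(K_{n_1,\dots,n_t})$ is the largest root, use either of the following:
\begin{itemize}
\item $\sum_i n_i/(x+n_i)$ is strictly decreasing on $(-n_t,\infty)$, so the polynomial has exactly one root there;
\item your polynomial is, up to sign, the characteristic polynomial of the quotient matrix for the partition into partite sets, so all its roots are eigenvalues of $K_{n_1,\dots,n_t}$ by Lemma~\ref{QM}.
\end{itemize}
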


\begin{Theorem}\label{N6}
	Let $\Gamma$ be an unbalanced signed $t$-partite graph of order $n$ with $t\ge 2$. Then
	\[
	\rho(\Gamma)\le \begin{cases} \rho(\Gamma_n(2)) & \mbox{if $t=2$},\\
\rho (T_{n, t}^-) & \mbox{if $t\ge 3$}
\end{cases}
	\]
	with equality if and only if $\Gamma\simeq \begin{cases} 
\Gamma_n(2) & \mbox{when $t=2$},\\ 
T_{n, t}^- &  \mbox{when $t\ge 3$}.
\end{cases}$
\end{Theorem}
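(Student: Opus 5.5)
We prove Theorem \ref{N6} by reducing to the fixed-partite-size result, Theorem \ref{N5}, and then optimizing over the partite sizes. Let $\Gamma$ be an unbalanced signed $t$-partite graph of order $n$, with partite sizes $n_1\ge\dots\ge n_t$ and $n_1+\dots+n_t=n$. We may assume every part is nonempty, since otherwise $\Gamma$ is $t'$-partite for some smaller $t'$. If $t'\ge 2$ we then pad the partition by splitting off vertices to form extra singleton-type parts; we need to check that this does not affect the comparison. By Theorem \ref{N5}, $\rho(\Gamma)\le\rho(\Gamma_{n_1,n_2})$ when $t=2$, and $\rho(\Gamma)\le\rho(K^-_{n_1,\dots,n_t})$ when $t\ge3$, with equality exactly in the stated cases. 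It therefore suffices to maximize the right-hand sides over all partitions of $n$ into $t$ positive parts, and to show that the maximum is attained only at the balanced partition.

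\textbf{Case $t=2$.} Set $p=n_1n_2$ and $s=n_1+n_2=n$. Then $(n_1-1)(n_2-1)=p-n+1$, so
\[
\rho(\Gamma_{n_1,n_2})^2=\tfrac12\left(p+\sqrt{p^2-16(p-n+1)}\right).
\]
This depends only on $p$ once $n$ is fixed. Its derivative in $p$ is $\tfrac12\bigl(1+(p-8)/\sqrt{p^2-16p+16n-16}\bigr)$, which is positive whenever $p\ge 8$. The case $p<8$ only arises for small $n$ and can be checked directly, using that the radicand $(p-8)^2+16(n-5)$ is positive when $n\ge4$, together with the small cases. Hence the expression is strictly increasing in $p$, and $p$ is maximized uniquely when $|n_1-n_2|\le1$. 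This yields $\Gamma_n(2)$, in agreement with \cite{CDG}.

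\textbf{Case $t\ge3$.} Here $\rho(K^-_{n_1,\dots,n_t})$ is the largest root $\mu$ of
\[
f(x)=\sum_i \frac{n_i}{x+n_i}-1 .
\]
This function is strictly decreasing for $x>0$. Note also that $K^-$ has spectrum equal to minus the spectrum of $K^+$, so $\mu=-\lambda_n(K_{n_1,\dots,n_t})$. Suppose $n_1\ge n_t+2$, and replace $(n_1,n_t)$ by $(n_1-1,n_t+1)$. We show the largest root strictly increases. Put $g(m)=m/(x+m)$; this function is strictly concave in $m$ for $x>0$. Therefore
\[
g(n_1-1)+g(n_t+1)>g(n_1)+g(n_t)
\]
at $x=\mu$. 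So the new function is positive at $\mu$. Since the new function is also decreasing and tends to $-1$ at infinity, its largest root exceeds $\mu$.

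We must still confirm that $\mu$ is indeed the largest root of $f$ and lies in the region $x>0$. This holds because $f(0^+)=t-1>0$. The other roots lie in $(-n_i,-n_{i+1})$, since the poles of $f$ sit at $x=-n_i$.

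Iterating the balancing move, the maximum is attained uniquely at $T_{n,t}$, so $\rho(\Gamma)\le\rho(T^-_{n,t})$. The equality characterization follows from the strict inequalities above together with the equality case of Theorem \ref{N5}.

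The main obstacle is twofold. First, in the $t=2$ case we must verify the monotonicity claim carefully for small $p$. Second, we must handle the case where $\Gamma$ has empty parts. When $t\ge3$, an unbalanced signed graph needs a cycle, and a graph with fewer nonempty parts is still $t$-partite with smaller sizes. Such a configuration is dominated by splitting a large part, which is again covered by the concavity argument extended to allow $n_t=0$, since $g(0)=0$.
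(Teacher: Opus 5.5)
Your proof is correct, but it is organized differently from the paper's.

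\textbf{The paper's route.} It first optimizes $\lambda_1$ and $\lambda_n$ separately over all $t$-partite graphs of order $n$ (Theorems~\ref{N2} and~\ref{N4}), and then compares the two via Lemma~\ref{under}.
\begin{itemize}
\item For $t=2$ it rests on Theorem~\ref{N2}. That proof is an eigenvector-perturbation argument: Claim~\ref{turan} moves a vertex from the largest part to the smallest part and estimates $\mathbf{y}^\top(A(\Gamma')-A(\Gamma))\mathbf{x}$.
\item For $t\ge 3$ it rests on Theorem~\ref{N3}, together with the cited result of \cite{SGR} that $T_{n,t}$ maximizes the index among complete $t$-partite graphs of order $n$.
\end{itemize}

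\textbf{Your route.} You fix the partite sizes first via Theorem~\ref{N5}, and then optimize over size vectors directly.
\begin{itemize}
\item For $t=2$ you use the monotonicity in $p=n_1n_2$ of the closed form for $\rho(\Gamma_{n_1,n_2})$.
\item For $t\ge 3$ you use the strict concavity of $m\mapsto m/(x+m)$ in the secular equation. This amounts to a self-contained proof of the \cite{SGR} fact.
\end{itemize}

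\textbf{What each buys.} Your route never needs Theorem~\ref{N2}. It replaces the rather delicate Claim~\ref{turan} by a short calculus check on an explicit formula, and it makes the $t\ge 3$ case independent of \cite{SGR}. The paper's route is shorter given its earlier theorems. It also establishes the index result (Theorem~\ref{N2}) for every $t$, where no usable closed form exists; for $\rho$ this does not matter when $t\ge 3$, since the least eigenvalue dominates.

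\textbf{Minor slips to fix.}
\begin{itemize}
\item The radicand $(p-8)^2+16(n-5)$ vanishes at $n=4$, $p=4$, so it is not positive for all $n\ge 4$. The clean way to handle small $p$ is this. Unbalance forces $n_2\ge 2$, since $K_{n_1,1}$ is a tree. Hence $p\ge 2(n-2)\ge 8$ whenever $n\ge 6$, which puts every admissible $p$ in the range where your derivative is positive. For $n\in\{4,5\}$ there is only one admissible partition ($2+2$, respectively $3+2$), so nothing needs comparing.
\item You should write $\mu=-\lambda_n(K^-_{n_1,\dots,n_t})=\lambda_1(K^+_{n_1,\dots,n_t})$, not $-\lambda_n(K_{n_1,\dots,n_t})$. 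The least eigenvalue of the unsigned graph is not $-\mu$. You never use this identity, so nothing breaks.
\item The discussion of empty parts is unnecessary. Partite sets are nonempty by definition, and refining a partition leaves $\Gamma$ itself unchanged, so there is no comparison to check.
\end{itemize}
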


If $t=2$ in Theorem \ref{N2} (or Theorem \ref{N6}), we recover the result in \cite{CDG} as  $\lambda_1(\Gamma)=\rho(\Gamma)$ when $\Gamma$ is a unbalanced signed bipartite graph. 

Note that  $A(K_{n_1, \dots, n_t}^-)=-A(K_{n_1, \dots, n_t}^+)$. So with $n=n_1+\dots+n_t$, we have 
\[
\rho (K_{n_1, \dots, n_t}^-) =\max\{\lambda_1(K_{n_1, \dots, n_t}^-), -\lambda_n(K_{n_1, \dots, n_t}^-)\}=\lambda_1(K_{n_1, \dots, n_t}^+)
=\rho(K_{n_1, \dots, n_t}^+).
\] 
Recall that  $K_{n_1, \dots, n_t}^+$ uniquely achieves the maximum spectral radius among the balanced $t$-partite graphs with fixed partite sizes $n_1,\dots, n_t$.  From Lemma 2.3,  we have the following corollary of Theorems \ref{N5} and \ref{N6}.

\begin{Corollary}\label{N51}
(i) If  $\Gamma$ is a signed $t$-partite graph with partite sizes $n_1, \dots, n_t$, where $n_1\ge \dots\ge n_t$ and $t\ge 2$, then
\[
\rho(\Gamma)\le \rho (K_{n_1, \dots, n_t}^+)
\]
with equality if and only if $\Gamma\simeq K_{n_1, \dots, n_t}^+$, or $t\ge 3$ and $\Gamma\simeq K_{n_1, \dots, n_t}^-$.

(ii) If $\Gamma$ is a signed $t$-partite graph of order $n$ with $t\ge 2$, then
\[
\rho(\Gamma)\le \rho(T_{n,t}^+)
\]
with equality if and only if $\Gamma\simeq T_{n,t}^+$, or $t\ge 3$ and $\Gamma\simeq
T_{n, t}^-$.
\end{Corollary}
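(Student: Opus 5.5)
We derive Corollary \ref{N51} from Theorems \ref{N5} and \ref{N6} by splitting $\Gamma$ into the balanced and unbalanced cases. We use Lemma 2.3, i.e.\ the fact that $\lambda_1(\Gamma)\le\lambda_1(G)$ and, more generally, that $\rho(\Gamma)\le\rho(G)$ for the underlying graph $G$, with equality in the spectral radius exactly when $\Gamma$ or $-\Gamma$ is switching equivalent to $G^+$ (for connected $G$).

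For (i), suppose first that $\Gamma=(G,\sigma)$ is balanced. Then $\Gamma$ is switching equivalent to $G^+$, so $\rho(\Gamma)=\rho(G)$. Since $G$ is a spanning subgraph of $K_{n_1,\dots,n_t}$, Perron--Frobenius monotonicity gives $\rho(G)\le\rho(K_{n_1,\dots,n_t})$. Equality holds only if $G=K_{n_1,\dots,n_t}$, because the complete multipartite graph is connected and deleting an edge strictly decreases its index. In that case $\Gamma\simeq K^+_{n_1,\dots,n_t}$.

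Suppose next that $\Gamma$ is unbalanced, and apply Theorem \ref{N5}. For $t\ge3$ it gives $\rho(\Gamma)\le\rho(K^-_{n_1,\dots,n_t})=\rho(K^+_{n_1,\dots,n_t})$, as noted after Theorem \ref{N6}, with equality iff $\Gamma\simeq K^-_{n_1,\dots,n_t}$. For $t=2$ it gives $\rho(\Gamma)\le\rho(\Gamma_{n_1,n_2})$, so we need the strict inequality $\rho(\Gamma_{n_1,n_2})<\rho(K^+_{n_1,n_2})=\sqrt{n_1n_2}$. From the explicit formula this amounts to $\sqrt{n_1^2n_2^2-16(n_1-1)(n_2-1)}<n_1n_2$. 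That inequality holds because an unbalanced bipartite graph contains a cycle, so $n_1,n_2\ge2$ and $(n_1-1)(n_2-1)>0$. Alternatively, the equality part of Lemma 2.3 applies directly, since $\Gamma_{n_1,n_2}$ is unbalanced and bipartite and $-\Gamma_{n_1,n_2}$ is then also unbalanced. Combining the two cases yields (i) together with its equality characterization.

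For (ii) the argument is identical, using Theorem \ref{N6} for unbalanced $\Gamma$ and the classical Tur\'an index result (cited as \cite{SGR}) for balanced $\Gamma$. In the balanced case we have $\rho(\Gamma)=\rho(G)\le\rho(T_{n,t})$, with equality iff $G=T_{n,t}$. In the unbalanced case with $t\ge3$, $\rho(T^-_{n,t})=\rho(T^+_{n,t})$ as above. For $t=2$ we need $\rho(\Gamma_n(2))<\rho(T_{n,2})$, which follows from the strict inequality established in (i) with $n_1=\lceil n/2\rceil$ and $n_2=\lfloor n/2\rfloor$.

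The only step requiring care is the strictness in the bipartite case. It is where $K^-$ coincides with $K^+$ up to switching: for bipartite graphs, $K^-_{n_1,n_2}$ is balanced and equal to $K^+_{n_1,n_2}$ after switching. This coincidence explains why the extra extremal graph appears only when $t\ge3$.
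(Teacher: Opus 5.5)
Your proposal is correct and follows essentially the paper's route: split $\Gamma$ into the balanced case, handled by the Perron--Frobenius/Tur\'an-type facts that $K_{n_1,\dots,n_t}^+$ and $T_{n,t}^+$ are the unique maximizers, and the unbalanced case, handled by Theorems~\ref{N5} and~\ref{N6} together with $\rho(K^-_{n_1,\dots,n_t})=\rho(K^+_{n_1,\dots,n_t})$. Lemma~\ref{under} supplies the strict inequality $\rho(\Gamma_{n_1,n_2})<\rho(K_{n_1,n_2}^+)$ in the bipartite case, and your direct check via the closed formula (using $n_1,n_2\ge 2$) is a valid alternative to that step.
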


To prove Theorems \ref{N5} and \ref{N6}, we also prove the following results on the least eigenvalue, which are also of independent  interest. 
For an unbalanced signed $2$-partite graph $\Gamma$ of order $n$,  $\lambda_n(\Gamma)=-\lambda_1(\Gamma)$ is minimum if and only if $\lambda_1(\Gamma)$ is maximum, see Theorems \ref{N1} and \ref{N2}. So it suffices to consider unbalanced signed $t$-partite graphs with $t\ge 3$.

\begin{Theorem}\label{N3}
Let $\Gamma$ be an unbalanced signed $t$-partite graph with partite sizes $n_1, \dots, n_t$, where $n_1\ge \dots\ge n_t$ and $t\ge 3$. Then for $n=n_1+\dots+n_t$, 
	\[
	\lambda_n(\Gamma)\ge \lambda_n (K_{n_1, \dots, n_t}^-)
	\]
	with equality if and only if $\Gamma\simeq K_{n_1, \dots, n_t}^-$.
\end{Theorem}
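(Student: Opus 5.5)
We need $\lambda_n(\Gamma)\ge \lambda_n(K^-_{n_1,\dots,n_t})=-\lambda_1(K^+_{n_1,\dots,n_t})$ for every unbalanced $\Gamma$ with $t\ge3$, with equality only for $K^-$. The plan is to push $\Gamma$ up to the complete underlying graph and then show that, on $K_{n_1,\dots,n_t}$, the all-negative signature is the unique minimizer of the least eigenvalue up to switching.

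\textbf{Step 1: reduction to $-\Gamma$.} Since $\lambda_n(\Gamma)=-\lambda_1(-\Gamma)$, the inequality is equivalent to $\lambda_1(-\Gamma)\le\lambda_1(K^+_{n_1,\dots,n_t})$. The underlying graph $G$ of $-\Gamma$ is a spanning subgraph of $K_{n_1,\dots,n_t}$. By Lemma \ref{under} and Perron--Frobenius monotonicity,
\[
\lambda_1(-\Gamma)\le\lambda_1(G)\le\lambda_1(K_{n_1,\dots,n_t}).
\]
This already gives the inequality.

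\textbf{Step 2: equality forces the complete graph and a balanced $-\Gamma$.} Suppose equality holds. Since $K_{n_1,\dots,n_t}$ is connected, strict monotonicity of the Perron root under adding edges gives $G=K_{n_1,\dots,n_t}$. Let $x$ be a unit eigenvector of $A(-\Gamma)$ for $\lambda_1$. Then
\[
\lambda_1=x^{T}A(-\Gamma)x\le |x|^{T}A(G)|x|\le\lambda_1(G),
\]
so equality throughout makes $|x|$ the Perron vector of $G$, which is strictly positive. Equality in the first step then requires $\sigma'(uv)x_ux_v>0$ for every edge, where $\sigma'$ is the signature of $-\Gamma$. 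Switching the set $\{u:x_u<0\}$ makes every edge positive. Hence $-\Gamma$ is balanced and switching equivalent to $K^+_{n_1,\dots,n_t}$.

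\textbf{Step 3: converting back to $\Gamma$.} From Step 2, $\Gamma$ is switching equivalent to $K^-_{n_1,\dots,n_t}$. This $K^-$ is genuinely unbalanced because $t\ge3$ supplies triangles, and a triangle with three negative edges is a negative cycle. Conversely, $K^-$ attains the bound. This settles the equality case.

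The only delicate point is the equality analysis in Step 2, which rests on the strict positivity of the Perron vector of the connected graph $K_{n_1,\dots,n_t}$. The hypothesis $t\ge3$ is used only to make the extremal graph admissible. For $t=2$, $K^-$ is balanced, so it is not in the class and the statement fails.
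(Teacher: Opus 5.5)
Your proof is correct, but it is organized differently from the paper's. The paper argues extremally. It takes a minimizer $\Gamma$ and uses Lemma \ref{L+} to switch to $\Gamma'$, whose least eigenvalue has a non-negative eigenvector $\mathbf{x}$. It then compares directly with $K^-_{n_1,\dots,n_t}$ via $0\le\lambda_n(K^-_{n_1,\dots,n_t})-\lambda_n(\Gamma')\le\mathbf{x}^\top\bigl(A(K^-_{n_1,\dots,n_t})-A(\Gamma')\bigr)\mathbf{x}=-2\sum_{uv\in E_1}x_ux_v-4\sum_{uv\in E_2}x_ux_v\le 0$. Here $E_1$ is the set of missing edges and $E_2$ the set of positive edges of $\Gamma'$. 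If $E_1\cup E_2\neq\emptyset$, equality makes $\mathbf{x}$ an eigenvector of $K^-_{n_1,\dots,n_t}$ with a zero entry, and the eigen-equations spread that zero to all of $\mathbf{x}$.

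You instead pass to $-\Gamma$ and prove the chain $\lambda_1(-\Gamma)\le\lambda_1(G)\le\lambda_1(K_{n_1,\dots,n_t})$ for every signed $t$-partite graph. You then settle equality by strict Perron--Frobenius monotonicity and positivity of the Perron vector. The second half of your Step 2 is just the equality case of Lemma \ref{under} for connected $G$. You could cite that, together with the fact that a balanced signed graph is switching equivalent to $G^+$.

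Both arguments rest on the same estimate $|\mathbf{x}^\top A(\Gamma)\mathbf{x}|\le|\mathbf{x}|^\top A(G)|\mathbf{x}|$, and the paper's zero-propagation does the job of your Perron positivity. Your route buys the following:
\begin{itemize}
\item It needs neither a minimizer nor Lemma \ref{L+}.
\item It covers balanced graphs too.
\item It shows plainly that unbalancedness and $t\ge 3$ only serve to put $K^-_{n_1,\dots,n_t}$ in the class.
\item Applying the same chain to $\Gamma$ and $-\Gamma$ gives $\rho(\Gamma)\le\lambda_1(K_{n_1,\dots,n_t})$ directly, so the case $t\ge 3$ of Theorem \ref{N5} does not really need Theorem \ref{N1}.
\end{itemize}
The paper's route buys uniformity with the switching-and-perturbation template it needs for the index in Theorem \ref{N1}.

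One small correction to your closing remark: for $t=2$ the statement does not fail. There $K^-_{n_1,n_2}$ is switching equivalent to $K^+_{n_1,n_2}$, so your chain gives strict inequality for every unbalanced $\Gamma$. Neither side of the equivalence can hold, so it holds vacuously. The bound is true but simply not attained within the class.
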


\begin{Theorem}\label{N4}
Let $\Gamma$ be an unbalanced signed $t$-partite graph of order $n$ with $t\ge 3$. Then
\[
\lambda_n(\Gamma)\ge \lambda_n (T_{n, t}^-)
\]
with equality if and only if $\Gamma\simeq T_{n, t}^-$.
\end{Theorem}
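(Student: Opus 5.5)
We first reduce Theorem \ref{N4} to Theorem \ref{N3}, and then compare complete multipartite graphs under a transfer of one vertex.

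\textbf{Step 1 (reduction to complete graphs).} Let $\Gamma$ be an unbalanced signed $t$-partite graph of order $n$, $t\ge 3$, with partite sizes $n_1\ge\dots\ge n_t$ (empty parts are allowed; if some size is $0$ the graph is also $t'$-partite for a smaller $t'$, a case handled below). By Theorem \ref{N3}, $\lambda_n(\Gamma)\ge\lambda_n(K_{n_1,\dots,n_t}^-)$, with equality only if $\Gamma\simeq K_{n_1,\dots,n_t}^-$. Since $A(K^-_{n_1,\dots,n_t})=-A(K^+_{n_1,\dots,n_t})$, we have $\lambda_n(K^-_{n_1,\dots,n_t})=-\lambda_1(K^+_{n_1,\dots,n_t})$. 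The folklore result quoted in the introduction (Stevanovi\'c--Gutman--Rehman, Nikiforov) says that among $t$-partite graphs of order $n$, $T_{n,t}$ uniquely maximizes $\lambda_1$. Hence
\[
\lambda_1(K^+_{n_1,\dots,n_t})\le\lambda_1(T_{n,t}),
\]
with equality iff $\{n_i\}$ are the Tur\'an sizes. This gives $\lambda_n(\Gamma)\ge-\lambda_1(T_{n,t})=\lambda_n(T_{n,t}^-)$.

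\textbf{Step 2 (degenerate and borderline cases).} Two points need checking. First, $T_{n,t}^-$ is itself unbalanced when $t\ge3$ and $n\ge t$, since any triangle across three parts has three negative edges. Second, if $\Gamma$ uses fewer than $t$ nonempty parts, it is an unbalanced $t'$-partite graph with $t'<t$. For $t'\ge3$, apply the same argument with $t'$ and use $\lambda_1(T_{n,t'})<\lambda_1(T_{n,t})$, which holds because $T_{n,t'}$ is a proper spanning subgraph of a $t$-partite graph of order $n$. For $t'=2$, $\Gamma$ is bipartite and sign-symmetric, so $\lambda_n(\Gamma)=-\lambda_1(\Gamma)\ge-\lambda_1(G)\ge-\lambda_1(T_{n,2})>-\lambda_1(T_{n,t})$ by Lemma \ref{under}.

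\textbf{Step 3 (equality).} Equality forces equality in both inequalities of Step 1. The first gives $\Gamma\simeq K^-_{n_1,\dots,n_t}$, and the second gives that the sizes are balanced. Together these yield $\Gamma\simeq T_{n,t}^-$.

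The main obstacle is the degenerate bookkeeping in Step 2, i.e., making sure that graphs with empty or few parts cannot tie. If the cited Tur\'an-type result is only stated for complete $t$-partite graphs, I would instead prove directly that moving a vertex from a larger part to a smaller one (sizes differing by at least $2$) strictly increases $\lambda_1(K^+)$. The argument uses the Perron vector, whose entries are constant on each part and larger on smaller parts, together with the Rayleigh quotient.
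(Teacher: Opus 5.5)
Your proposal is correct and follows essentially the same route as the paper. Both apply Theorem \ref{N3} to reduce to $K^-_{n_1,\dots,n_t}$, use $\lambda_n(K^-_{n_1,\dots,n_t})=-\lambda_1(K^+_{n_1,\dots,n_t})$, and then invoke the Stevanovi\'c--Gutman--Rehman result that $T_{n,t}$ uniquely maximizes $\lambda_1$ among complete $t$-partite graphs of order $n$, tracing equality back through both steps. Your Step 2 on empty parts is extra bookkeeping the paper omits; it is correct but avoidable, since any $t'$-partite graph with $t'<t\le n$ can be re-partitioned into $t$ nonempty independent sets and Theorem \ref{N3} applied directly.
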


The paper is organized as follows. In Section 2, we present some preliminary results. In Section 3, we establish a key  lemma on the existence of a non-negative eigenvector associated with the index of a particular unbalanced signed complete multipartite graph. In Section 4, we prove Theorems \ref{N1} and \ref{N2}. In Section 5, we prove Theorems \ref{N3}, \ref{N4}, \ref{N5} and \ref{N6}.

\section{Preliminaries}

For a real matrix $M_{n\times n}$ with $n$ real eigenvalues, denote by  $\mu_i(M)$ the $i$-th largest eigenvalue of $M$.

\begin{Lemma}\label{interlace}\cite{HJ}
Let $M$ be a real symmetric matrix of order $n$ and $M'$ a principal submatrix of $M$. Then 
\[
\mu_1(M)\ge \mu_1(M')
\]
\end{Lemma}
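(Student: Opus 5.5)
The statement to establish is Lemma \ref{interlace}: for a real symmetric matrix $M$ of order $n$ and a principal submatrix $M'$, we have $\mu_1(M)\ge \mu_1(M')$. The plan is to use the Rayleigh quotient characterization of the largest eigenvalue of a real symmetric matrix. The key identity is
\[
\mu_1(M)=\max_{x\in\mathbb{R}^n,\ x\neq 0}\frac{x^{\top}Mx}{x^{\top}x}.
\]
This follows from the spectral theorem. Write $M=Q\,\mathrm{diag}(\mu_1,\dots,\mu_n)Q^{\top}$ with $Q$ orthogonal, and put $y=Q^{\top}x$. Then
\[
x^{\top}Mx=\sum_i \mu_i y_i^2\le \mu_1\sum_i y_i^2=\mu_1 x^{\top}x,
\]
and equality holds when $x$ is a unit eigenvector for $\mu_1$.

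Next, suppose $M'$ is the principal submatrix of $M$ indexed by a subset $S\subseteq\{1,\dots,n\}$ with $|S|=k$. Since $M'$ is itself real symmetric, it has real eigenvalues. Take a unit eigenvector $z\in\mathbb{R}^k$ of $M'$ for $\mu_1(M')$. Define $x\in\mathbb{R}^n$ by placing the entries of $z$ in the coordinates of $S$ and zeros elsewhere. Then $x^{\top}x=z^{\top}z=1$. Moreover, $x^{\top}Mx=\sum_{i,j\in S}m_{ij}x_ix_j=z^{\top}M'z=\mu_1(M')$, because every term involving a coordinate outside $S$ vanishes.

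Applying the Rayleigh characterization to $M$ then gives
\[
\mu_1(M)\ge x^{\top}Mx=\mu_1(M'),
\]
which is the claim.

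None of these steps is a real obstacle. The only point needing care is checking that the zero-padding gives exactly $x^{\top}Mx=z^{\top}M'z$. Alternatively, the lemma is the $i=1$ case of Cauchy interlacing, as cited from \cite{HJ}.
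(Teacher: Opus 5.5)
Your proof is correct. The paper gives no argument of its own for this lemma; it simply cites the Cauchy interlacing theorem from \cite{HJ}. For a $k\times k$ principal submatrix $M'$, that theorem gives the full chain $\mu_i(M)\ge \mu_i(M')\ge \mu_{i+n-k}(M)$ for $1\le i\le k$, and the lemma is its $i=1$ case.

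Your argument proves exactly that special case directly: the spectral theorem gives the Rayleigh characterization of $\mu_1(M)$, and zero-padding a top eigenvector of $M'$ gives a test vector. This makes the proof self-contained. It also uses the same Rayleigh-principle reasoning the paper relies on throughout its later proofs.

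What the citation buys is the remaining interlacing inequalities for $i\ge 2$. The paper never needs these, since it only compares the index of a signed graph with that of an induced signed subgraph.
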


Let $B$ be an $n\times n$ matrix whose rows and columns are indexed by elements in $X=\{1, \dots, n\}$.  Let $\pi=X_1\cup \dots\cup X_m$ be a partition of $X$. For $1\le i,j\le m$, let $B_{ij}$ be the submatrix of $B$ whose rows and columns are indexed by elements of $X_i$ and $X_j$. The partition $\pi$ is equitable if the row sum of each $B_{ij}$ is a constant for $1\le i,j\le m$. The $m\times m$ matrix whose $(i,j)$-entry is the average row sum of $B_{ij}$ with $1\le i,j\le m$ is called the quotient matrix of $B$.

\begin{Lemma}\label{QM}\cite[Lemma 2.3.1]{BH}
Let $B$ be a real symmetric matrix. Then the spectrum of the quotient matrix of $B$ with respect to an equitable partition is contained in the spectrum of $B$.
\end{Lemma}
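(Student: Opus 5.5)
The statement to prove is Lemma \ref{QM}: for a real symmetric matrix $B$ and an equitable partition $\pi=X_1\cup\dots\cup X_m$, every eigenvalue of the quotient matrix $Q$ is an eigenvalue of $B$. The plan is to encode the partition by its characteristic matrix and lift eigenvectors of $Q$ to eigenvectors of $B$.

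\textbf{Step 1: the intertwining relation.} Let $S$ be the $n\times m$ characteristic matrix of $\pi$, so $S_{uj}=1$ if $u\in X_j$ and $0$ otherwise. Its columns are nonzero with disjoint supports, so $S$ has full column rank $m$. For $u\in X_i$,
\[
(BS)_{uj}=\sum_{v\in X_j} b_{uv}.
\]
This is the row sum of $B_{ij}$ at row $u$. Equitability says this sum is a constant independent of $u\in X_i$, so it equals the average row sum $q_{ij}$. Hence $(BS)_{uj}=q_{ij}=(SQ)_{uj}$, that is,
\[
BS=SQ.
\]

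\textbf{Step 2: lifting eigenvectors.} Let $Qy=\theta y$ with $y\neq 0$, where $\theta$ is possibly complex a priori. Then
\[
B(Sy)=SQy=\theta\, Sy .
\]
Since $S$ has full column rank, $Sy\neq 0$, so $\theta$ is an eigenvalue of $B$. In particular $\theta$ is real, because $B$ is symmetric.

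\textbf{Step 3: multiplicities.} To get containment of spectra as multisets, I would pass to generalized eigenspaces or use a similarity argument. Let $D=S^{T}S$, which is diagonal with entries $|X_i|$. From $BS=SQ$ we get $D^{1/2}QD^{-1/2}=D^{-1/2}S^{T}BSD^{-1/2}$. This matrix is symmetric, so $Q$ is diagonalizable with real eigenvalues.

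Its eigenvectors $y_1,\dots,y_k$ for a fixed eigenvalue $\theta$ are linearly independent. Their images $Sy_1,\dots,Sy_k$ stay linearly independent by the injectivity of $S$. So the multiplicity of $\theta$ for $Q$ is at most its multiplicity for $B$.

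The only point needing care is verifying $BS=SQ$, which is exactly where equitability is used. The rest is routine linear algebra.
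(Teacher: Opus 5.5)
Your proof is correct and is the standard argument behind the cited result \cite[Lemma 2.3.1]{BH}; the paper gives no proof of its own. Equitability gives $BS=SQ$ for the characteristic matrix $S$, $S$ lifts eigenvectors of $Q$ injectively to eigenvectors of $B$, and your Step 3 symmetrization $D^{1/2}QD^{-1/2}=D^{-1/2}S^{\top}BSD^{-1/2}$ correctly strengthens this to containment with multiplicities.
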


\begin{Lemma}\label{under}\cite{St1}
For a signed graph $\Gamma=(G,\sigma)$, we have $\lambda_1(\Gamma)\le\lambda_1(G)$ with equality when $G$ is connected if and only if $\Gamma$ is balanced.
\end{Lemma}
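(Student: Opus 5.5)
Lemma \ref{under} is cited from \cite{St1}; for completeness I would prove it by comparing the Rayleigh quotient of $A(\Gamma)$ with that of $A(G)$.

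\textbf{The inequality.} Let $x$ be a unit eigenvector of $A(\Gamma)$ for $\lambda_1(\Gamma)$, and let $|x|$ be the vector of absolute values of its entries. Then
\[
\lambda_1(\Gamma)=x^TA(\Gamma)x=2\sum_{uv\in E(G)}\sigma(uv)x_ux_v\le 2\sum_{uv\in E(G)}|x_u||x_v|=|x|^TA(G)|x|\le \lambda_1(G),
\]
where the final step is the Rayleigh principle for the symmetric matrix $A(G)$.

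\textbf{Balanced implies equality.} If $\Gamma$ is balanced, then by \cite[Lemma 5.3]{Z2} it is switching equivalent to $G^+$. Switching equivalent signed graphs have similar adjacency matrices, so $\lambda_1(\Gamma)=\lambda_1(G^+)=\lambda_1(G)$. This direction does not need connectivity.

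\textbf{Equality and connectivity imply balanced.} Now let $G$ be connected and suppose $\lambda_1(\Gamma)=\lambda_1(G)$. Then both inequalities in the chain are equalities.
\begin{itemize}
\item Since $|x|^TA(G)|x|=\lambda_1(G)$, the vector $|x|$ is an eigenvector of $A(G)$ for $\lambda_1(G)$.
\item By Perron--Frobenius for the connected graph $G$, this eigenvalue is simple and $|x|$ has all entries strictly positive. Hence $x_u\neq 0$ for every vertex $u$.
\item Equality in the first inequality gives $\sigma(uv)x_ux_v=|x_u||x_v|$ for every edge $uv$. Because each $x_u\ne 0$, this means $\sigma(uv)=\operatorname{sgn}(x_u)\operatorname{sgn}(x_v)$.
\item Let $U=\{u: x_u<0\}$ and switch $U$. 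Each edge $uv$ gets the new sign $\sigma(uv)\operatorname{sgn}(x_u)\operatorname{sgn}(x_v)=1$. So $\Gamma$ is switching equivalent to $G^+$, and therefore balanced.
\end{itemize}

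\textbf{Main obstacle.} The only delicate point is ruling out zero entries of $x$. Without that, the sign relation says nothing about edges with a zero endpoint. This is exactly where connectivity is used, through the strict positivity of the Perron vector.
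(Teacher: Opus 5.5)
Your proof is correct. The paper gives no proof of this lemma (it is simply quoted from \cite{St1}), so there is no in-paper argument to compare with. What you wrote is the standard self-contained proof. Its key device is replacing an eigenvector by its entrywise absolute value, comparing Rayleigh quotients, and reading off sign information from the equality case. This is the same device the paper itself uses at the end of the proof of Lemma~\ref{nne}.

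One small remark: you do not need the full Perron--Frobenius theorem to get positivity of $|x|$. If $|x_u|=0$, then $0=\lambda_1(G)|x_u|=\sum_{w\sim u}|x_w|$ forces every neighbour of $u$ to vanish. Connectivity of $G$ then propagates this to $|x|=0$, a contradiction. This is the same kind of propagation argument the paper uses in the proof of Theorem~\ref{N3}.
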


\begin{Lemma}\label{L+}\cite{Stan,SL}
Given a signed graph $\Gamma$ and any of its eigenvalue $\lambda$, $\Gamma$  is switching equivalent to some $\Gamma^*$ such that as an eigenvalue of $\Gamma^*$, $\lambda$ has a non-negative eigenvector.
\end{Lemma}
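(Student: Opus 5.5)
The plan is to take an arbitrary real eigenvector for $\lambda$ and apply the switching that flips the signs of exactly its negative coordinates. Since switching acts on $A(\Gamma)$ as conjugation by a diagonal $\pm1$ signature matrix, the transformed vector will be a non-negative eigenvector of the switched signed graph. No spectral machinery beyond the remark in the introduction is needed: switching corresponds to multiplying the rows and columns indexed by $U$ by $-1$.

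Concretely, let $A=A(\Gamma)$. Because $A$ is real symmetric and $\lambda$ is an eigenvalue, there is a nonzero real vector $x=(x_v)_{v\in V(G)}$ with $Ax=\lambda x$.

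Set $U=\{v\in V(G): x_v<0\}$. Let $D$ be the diagonal matrix with $D_{vv}=-1$ for $v\in U$ and $D_{vv}=1$ otherwise. Let $\Gamma^*$ be the signed graph obtained from $\Gamma$ by switching $U$.

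The key observation is that $A(\Gamma^*)=DAD$. For $u,v$ on the same side of the cut $(U,V(G)\setminus U)$, the entry $a_{uv}$ is multiplied by $D_{uu}D_{vv}=1$. For $u,v$ on opposite sides it is multiplied by $-1$, which is exactly the sign change on the edges between $U$ and $V(G)\setminus U$. If $U=\emptyset$ or $U=V(G)$, then $D=\pm I$ and $DAD=A$, so $\Gamma^*=\Gamma$, consistent with switching the empty set or everything.

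Since $D^2=I$, we get
\[
A(\Gamma^*)(Dx)=DAD\,Dx=DAx=\lambda Dx .
\]
Also $(Dx)_v=|x_v|$ for every $v$. Hence $Dx$ is a nonzero, entrywise non-negative eigenvector of $\Gamma^*$ for $\lambda$, and $\Gamma^*$ is switching equivalent to $\Gamma$ by construction.

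There is no genuine obstacle. The only points needing care are that a real eigenvector exists (by symmetry of $A$) and that the matrix identity $A(\Gamma^*)=DAD$ correctly encodes the switching operation on edges.
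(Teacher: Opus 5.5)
Your proof is correct: switching the set $U$ of negative coordinates acts on $A(\Gamma)$ as conjugation by the signature matrix $D$, so $Dx=(|x_v|)_{v}$ is a non-negative eigenvector of $\Gamma^*$ for $\lambda$. The paper gives no proof of its own and only cites the lemma from Stani\'c and Sun--Liu--Lan, and your construction is the standard argument behind that result.
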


\begin{Lemma}\label{SE}\cite{Z1}
Two signed graphs  are switching equivalent if and only if each cycle has the same sign in both signed graphs.
\end{Lemma}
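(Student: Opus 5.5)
The plan is to prove the two directions separately. The key device is the product signature $\tau=\sigma_1\sigma_2$ on the common underlying graph $G$, where $\Gamma_1=(G,\sigma_1)$ and $\Gamma_2=(G,\sigma_2)$. A cycle $C$ has the same sign in $\Gamma_1$ and $\Gamma_2$ exactly when $\prod_{e\in C}\tau(e)=1$. Likewise, switching $U$ in $\Gamma_1$ produces $\Gamma_2$ exactly when $\tau(uv)=-1$ precisely for the edges $uv$ with one end in $U$ and the other in $V(G)\setminus U$. So the statement reduces to the following claim: $(G,\tau)$ has all cycles positive if and only if the negative edges of $\tau$ form an edge cut $[U,V(G)\setminus U]$ for some $U\subset V(G)$.

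\emph{Necessity.} Suppose $\Gamma_2$ is obtained from $\Gamma_1$ by switching $U$. Then $\tau(e)=-1$ exactly for the edges of the cut between $U$ and $V(G)\setminus U$. Traversing a cycle $C$, we return to the starting side, so $C$ crosses the cut an even number of times. Hence $\prod_{e\in C}\tau(e)=1$, and $C$ has the same sign in both signed graphs.

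\emph{Sufficiency.} Assume every cycle is $\tau$-positive. It suffices to construct $U$ component by component, so let $G$ be connected. Fix a spanning tree $T$ and a root $r$. For each vertex $v$, let $s(v)$ be the product of $\tau$ over the edges of the unique $r$--$v$ path in $T$, and set $U=\{v: s(v)=-1\}$. We must show $\tau(uv)=s(u)s(v)$ for every edge $uv$, because then $\tau(uv)=-1$ exactly when $uv$ joins $U$ to its complement.
\begin{itemize}
\item If $uv\in E(T)$, one tree path extends the other by the edge $uv$, so $s(v)=s(u)\tau(uv)$, which gives the identity.
\item If $uv\notin E(T)$, then $uv$ together with the tree path from $u$ to $v$ forms the fundamental cycle $C$. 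The product of $\tau$ along that tree path is $s(u)s(v)$, since the common part of the two root paths contributes squares. Positivity of $C$ then gives $\tau(uv)\,s(u)s(v)=1$, which is the identity.
\end{itemize}
Switching $U$ in $\Gamma_1$ therefore multiplies each $\sigma_1(uv)$ by $\tau(uv)$, yielding $\sigma_2$.

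I do not expect a real obstacle. The only point needing care is the non-tree edges: the whole argument rests on the fact that a single cycle condition on each fundamental cycle, relative to a fixed spanning tree, already forces consistency of the vertex labelling $s$. One should also note explicitly that disconnected $G$ is handled by taking the union of the sets $U$ built in the separate components.
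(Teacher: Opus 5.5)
Your proof is correct. The paper does not prove this lemma at all; it is simply quoted from Zaslavsky \cite{Z1}. Your argument is therefore a self-contained proof of a fact the authors take on citation, not an alternative to a proof in the text.

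Your route is the standard one. Passing to the product signature $\tau=\sigma_1\sigma_2$ turns the statement into the balance criterion the paper already quotes in the Introduction from \cite[Lemma 5.3]{Z2}: $(G,\tau)$ is balanced if and only if it is switching equivalent to $G^+$. Switching $(G,\tau)$ to $G^+$ by $U$ says exactly that the negative edges of $\tau$ form the cut $[U,V(G)\setminus U]$. So once you have made the $\tau$-reduction, you could finish in one line by invoking that fact.

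Your spanning-tree potential $s$ instead proves the criterion directly. As you note, it also shows that checking the fundamental cycles of a single spanning forest already suffices.

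The individual steps are all sound:
\begin{itemize}
\item the tree-edge case $s(v)=s(u)\tau(uv)$;
\item the cancellation along the common root segment, which gives the product $s(u)s(v)$ on the tree path from $u$ to $v$;
\item the observation that a non-tree edge of a simple graph closes a genuine cycle;
\item taking the union of the sets $U$ over the components.
\end{itemize}
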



Given a signed graph $\Gamma=(G, \sigma)$ and $\emptyset\ne V\subseteq V(G)$, let $G[V]$ be the subgraph of $G$ induced by $V$,  the signed graph $\Gamma[V]=(G[V], \sigma_V)$ is called the signed subgraph of $\Gamma$  induced by $V$, where $\sigma_V$ is the restriction of $\sigma$ on $E(G[V])$.

For integer  $t\ge 2$, let $\dot\Gamma_{n_1, \dots, n_t}$ be the  unbalanced signed complete  $t$-partite graph with partite sets  $V_1, \dots, V_t$ such that $n_i=|V_i|$ for $i=1, \dots, t$ and  $u_1u_2$ is the only negative edge with  $u_1\in V_1$, $u_2\in V_2$ and $n_1\ge n_2$. 

 For $t=2$, as $\dot\Gamma_{n_1,  n_2}$ is  unbalanced, one has $n_1\ge n_2\ge 2$.

\begin{Lemma}\label{gnt} For positive integers $n_1,\dots, n_t$ with $t\ge 2$, 
$\lambda_1(\dot\Gamma_{n_1, \dots, n_t})$ is the largest root of $\phi_{n_1,\dots, n_t}(x)\Pi_{i=3}^t(x+n_i)=0$, where 
\begin{align*}\phi_{n_1,\dots, n_t}(x)&=x^4-n_1n_2x^2+4(n_1-1)(n_2-1) \\
& \quad -\sum_{i=3}^{t}\frac{n_i}{x+n_i}\left((x^2-4)(x+n_1)(x+n_2)+4(2x+n_1+n_2-1)\right). 
\end{align*} 
Moreover, if $t\ge 3$ $\lambda_1(\dot\Gamma_{n_1, \dots, n_t})\ge \tau_{n_1,n_2}$ with equality if $t=3$ and $n_3=1$, 
where $\tau_{n_1,n_2}$ is the largest root of $h_{n_1,n_2}(x)=0$ with 
\[
h_{n_1,n_2}(x)=x^4-2x^3-(n_1+n_2+n_1n_2-4)x^2+2(n_1+n_2-2)x-4n_1-4n_2+4n_1n_2+4, 
\]
and $\lambda_1(\dot\Gamma_{n_1, \dots, n_t})\ge n_2$. 
\end{Lemma}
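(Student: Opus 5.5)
We would compute the spectrum of $\dot\Gamma_{n_1,\dots,n_t}$ through an equitable partition and Lemma \ref{QM}. Partition $V(\dot\Gamma_{n_1,\dots,n_t})$ into the $t+2$ cells
\[
\{u_1\},\ V_1\setminus\{u_1\},\ \{u_2\},\ V_2\setminus\{u_2\},\ V_3,\dots,V_t,
\]
dropping any empty cell, e.g. $V_1\setminus\{u_1\}$ when $n_1=1$. Every vertex has a constant signed number of neighbours in each cell, so the partition is equitable for $A=A(\dot\Gamma_{n_1,\dots,n_t})$. Any vector supported on one cell with zero sum lies in the kernel of $A$, because vertices in a common cell have identical signed neighbourhoods. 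These vectors span the orthogonal complement of the cell-constant vectors. Hence the spectrum of $A$ is the spectrum of the quotient matrix $Q$ together with $n-(t+2)$ zeros.

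Since $\dot\Gamma_{n_1,\dots,n_t}$ has an edge, $\lambda_1>0$, so $\lambda_1$ is the largest eigenvalue of $Q$. To get $\det(xI-Q)$, write the eigen-equations with values $a,b,c,d,y_3,\dots,y_t$ on the cells, $S=\sum_{i\ge3}n_iy_i$ and $T=a+(n_1-1)b+c+(n_2-1)d+S$:
\[
xa=-c+(n_2-1)d+S,\quad xb=c+(n_2-1)d+S,\quad xc=-a+(n_1-1)b+S,\quad xd=a+(n_1-1)b+S,
\]
\[
(x+n_i)y_i=T\quad (i\ge 3).
\]
Substituting $y_i=T/(x+n_i)$ eliminates the $V_i$-blocks; this is a Schur complement. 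It leaves a $4\times4$ determinant in $a,b,c,d$ with rank-one corrections weighted by $\sum_{i\ge3}n_i/(x+n_i)$. Expanding this determinant and multiplying by $\prod_{i\ge3}(x+n_i)$ should give exactly $\phi_{n_1,\dots,n_t}(x)\prod_{i=3}^t(x+n_i)$. The degrees agree ($4+(t-2)=t+2$), and at $t=2$ the expression must reduce to $x^4-n_1n_2x^2+4(n_1-1)(n_2-1)$, which is a useful check.

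This expansion is the main computational obstacle. The plan is to first compute the $t=2$ determinant, then add the rank-one term. The bracket $(x^2-4)(x+n_1)(x+n_2)+4(2x+n_1+n_2-1)$ should appear as the cofactor multiplying $\sum_i n_i/(x+n_i)$.

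For the bound $\lambda_1\ge\tau_{n_1,n_2}$ when $t\ge3$, pick $w\in V_3$. The principal submatrix on $V_1\cup V_2\cup\{w\}$ is $A(\dot\Gamma_{n_1,n_2,1})$, so Lemma \ref{interlace} gives $\lambda_1(\dot\Gamma_{n_1,\dots,n_t})\ge\lambda_1(\dot\Gamma_{n_1,n_2,1})$. By the first part with $t=3$ and $n_3=1$, this is the largest root of $\phi_{n_1,n_2,1}(x)(x+1)$. A direct expansion should show this polynomial equals $h_{n_1,n_2}(x)$, giving the inequality and the equality case.

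For $\lambda_1\ge n_2$ (for $t\ge3$), the case $n_2=1$ is immediate from $\lambda_1\ge1$, since there is an edge. If $n_2\ge2$, set $m=n_2-1$, and take $m$ vertices of $V_1\setminus\{u_1\}$ (possible since $n_1\ge n_2$), the $m$ vertices of $V_2\setminus\{u_2\}$, and $w$. They induce the all-positive graph $K_{m,m,1}$. Its index, from the quotient matrix $\left(\begin{smallmatrix}0&m&1\\ m&0&1\\ m&m&0\end{smallmatrix}\right)$, is $\frac{m+\sqrt{m^2+8m}}{2}$. This is at least $m+1=n_2$ because $m\ge1$, and Lemma \ref{interlace} finishes the bound.
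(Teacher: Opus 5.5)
Your proposal is correct. For the characteristic polynomial and for $\lambda_1\ge\tau_{n_1,n_2}$ it follows the paper's route:
\begin{itemize}
\item the same equitable partition $\{u_1\}$, $V_1\setminus\{u_1\}$, $\{u_2\}$, $V_2\setminus\{u_2\}$, $V_3,\dots,V_t$;
\item elimination of the $V_i$-blocks by a Schur complement, where the paper's bordering row plays the role of your $T$;
\item interlacing with $\dot\Gamma_{n_1,n_2,1}$ on $V_1\cup V_2\cup\{w\}$.
\end{itemize}
You differ from the paper in two places.

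First, you observe that every vector summing to zero on each cell lies in the kernel. This gives the exact splitting of the spectrum into that of $Q$ plus zeros. Together with $\lambda_1>0$, it makes explicit why $\lambda_1$ is a root of the quotient polynomial. The paper instead combines a rank count with the containment in Lemma \ref{QM}.

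Second, for $\lambda_1\ge n_2$ you interlace with the all-positive $K_{m,m,1}$, $m=n_2-1$, whose index is $\frac12\bigl(m+\sqrt{m^2+8m}\bigr)\ge m+1$. The paper instead evaluates
$\phi_{n_1,n_2,1}(n_2)=-\frac{(n_2+2)(n_2-1)}{n_2+1}\bigl((n_1-n_2)n_2^2+2(n_1+n_2)(n_2-2)+4\bigr)\le 0$
to get $\tau_{n_1,n_2}\ge n_2$. Your route needs no polynomial evaluation. It also recovers $\tau_{n_1,n_2}\ge n_2$ by taking $t=3$, $n_3=1$.

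There is one slip to fix. The polynomial $(x+1)\phi_{n_1,n_2,1}(x)$ is monic of degree $5$, so it cannot equal $h_{n_1,n_2}(x)$. The correct identity is $(x+1)\phi_{n_1,n_2,1}(x)=(x+2)h_{n_1,n_2}(x)$. Your conclusion $\lambda_1(\dot\Gamma_{n_1,n_2,1})=\tau_{n_1,n_2}$ still holds because $\lambda_1>0>-2$.

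Similarly, if you drop empty cells, the true quotient polynomial is $\phi_{n_1,\dots,n_t}(x)\prod_{i\ge3}(x+n_i)$ divided by a power of $x$. This is harmless for the same reason, since $\lambda_1>0$.

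The expansion you left open works out as you predict. Put $\sigma=\sum_{i\ge3}n_i/(x+n_i)$. Rewriting your four equations with $T$ on the right gives the matrix $N$ with rows $(x+1,n_1-1,2,0)$, $(1,x+n_1-1,0,0)$, $(2,0,x+1,n_2-1)$, $(0,0,1,x+n_2-1)$. Its determinant is $\det N=x^2(x+n_1)(x+n_2)-4(x+n_1-1)(x+n_2-1)$, which is exactly the bracket in $\phi_{n_1,\dots,n_t}$. The bordered determinant equals $(1-\sigma)\det N$ minus a $\sigma$-free term. Its value at $\sigma=0$ is $x^4-n_1n_2x^2+4(n_1-1)(n_2-1)$. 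Hence $\phi_{n_1,\dots,n_t}=x^4-n_1n_2x^2+4(n_1-1)(n_2-1)-\sigma\det N$, as claimed.
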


\begin{proof} Let \[
 A=\begin{pmatrix}
1 & 1 & n_1-1 & 1 & n_2-1 \\
1 & x+1 & n_1-1 & 2 & 0\\
1 & 1 & x+n_1-1 & 0 & 0 \\
1 & 2 & 0 & x+1 & n_2-1 \\
1 & 0 & 0 & 1 & x+n_2-1
\end{pmatrix}, 
\]
and for $t\ge 3$, 
\[
B=\begin{pmatrix}
 n_3 & \cdots & n_t\\
 0 & \cdots & 0\\
 0 & \cdots & 0\\
 0 & \cdots & 0\\
 0 & \cdots & 0\\
\end{pmatrix},\\
 C=\begin{pmatrix}
1 & 0 & 0 & 0 & 0 \\
\vdots & \vdots & \vdots & & \vdots\\
1 & 0 & 0 & 0 & 0 
\end{pmatrix}, 
D=\begin{pmatrix}
x+n_3 & \cdots & 0\\
 \vdots & & \vdots\\
0 & \cdots & x+n_t
\end{pmatrix},
\]
where $C$ is a $(t-2)\times 5$ matrix. For $t\ge 3$,
\begin{align*}
&\quad |A-BD^{-1}C|\\
&=\left| \begin{array}{ccccc}
1-\sum_{i=3}^{t}\frac{n_i}{x+n_i} & 1 & n_1-1 & 1 & n_2-1\\
1 & x+1 & n_1-1 & 2 & 0\\
1 & 1 & x+n_1-1 & 0 & 0\\
1 & 2 & 0 & x+1 & n_2-1\\
1 & 0 & 0 & 1 & x+n_2-1
\end{array} \right| \\
& =\left(1-\sum_{i=3}^{t}\frac{n_i}{x+n_i}\right) \left|\begin{array}{cccc}
x+1 & n_1-1 & 2 & 0\\
1 & x+n_1-1 & 0 & 0\\
2 & 0 & x+1 & n_2-1\\
0 & 0 & 1 & x+n_2-1
\end{array} \right|\\
& \quad 
-\left| \begin{array}{cccc}
1 & n_1-1 & 2 & 0\\
1 & x+n_1-1 & 0 & 0\\
1 & 0 & x+1 & n_2-1\\
1 & 0 & 1 & x+n_2-1
\end{array} \right|
+(n_1-1) \left| \begin{array}{cccc}
1 & x+1 & 2 & 0\\
1 & 1 & 0 & 0\\
1 & 2 & x+1 & n_2-1\\
1 & 0 & 1 & x+n_2-1
\end{array} \right| \\
& \quad -\left| \begin{array}{cccc}
1 & x+1 & n_1-1 & 0\\
1 & 1 & x+n_1-1 & 0\\
1 & 2 & 0 & n_2-1\\
1 & 0 & 0 & x+n_2-1
\end{array} \right|
+(n_2-1)\left| \begin{array}{ccccc}
1 & x+1 & n_1-1 & 2 \\
1 & 1 & x+n_1-1 & 0 \\
1 & 2 & 0 & x+1 \\
1 & 0 & 0 & 1 
\end{array} \right|\\
& = \phi_{n_1,\dots, n_t}(x). 
\end{align*}
For $t=2$, $|A-BD^{-1}C|=|A|=\phi_{n_1,n_2}(x)$.

Note that $n_1-1$, $n_2-1$, $n_3, \dots, n_t$ rows of $A(\dot\Gamma_{n_1, \dots,  n_t})$ corresponding to the vertices in $V_1\setminus\{u_1\}$, $V_2\setminus\{u_2\}$, $V_3, \dots, V_t$ respectively are equal. So the rank of $A(\dot\Gamma_{n_1, \dots,  n_t})$ is at most $t+2$, that is, $0$ is the eigenvalue of $\dot\Gamma_{n_1, \dots,  n_t}$ with multiplicity at least $n-t-2$. 
We partition $V(\dot\Gamma_{n_1, \dots,  n_t})$ as $\{u_1\}\cup (V_1\setminus\{u_1\})\cup \{u_2\}\cup (V_2\setminus\{u_2\})\cup V_3\cup \dots \cup V_t$. With respect to this partition, the quotient matrix of $A(\dot\Gamma_{n_1, \dots,  n_t})$ is 
\[
Q=\left(\begin{matrix}
0 & 0 & -1 & n_2-1 & n_3 & \cdots & n_t\\
0 & 0 & 1 & n_2-1 & n_3 & \cdots & n_t\\
-1 & n_1-1 & 0 & 0 & n_3 & \cdots & n_t\\
1 & n_1-1 & 0 & 0 & n_3 & \cdots & n_t\\
1 & n_1-1 & 1 & n_2-1 & 0 & \cdots & n_t\\
\vdots & \vdots & \vdots & \vdots & \vdots & & \vdots\\
1 & n_1-1 & 1 & n_2-1 & n_3 & \cdots & 0\\
\end{matrix}\right)
\]
with characteristic polynomial 
\begin{align*}
\Phi & =
\left| \begin{array}{cccccccc}
x & 0 & 1 & -(n_2-1) & -n_3 & \cdots & -n_t\\
0 & x & -1 & -(n_2-1) & -n_3 & \cdots & -n_t\\
1 & -(n_1-1) & x & 0 & -n_3 & \cdots & -n_t\\
-1 & -(n_1-1) & 0 & x & -n_3 & \cdots & -n_t\\
-1 & -(n_1-1) & -1 & -(n_2-1) & x & \cdots & -n_t\\
\vdots & \vdots & \vdots & \vdots & \vdots & & \vdots\\
-1 & -(n_1-1) & -1 & -(n_2-1) & -n_3 & \cdots & x\\
\end{array} \right|\\
&=
\left| \begin{array}{c:cccccccc}
1 & 1 & n_1-1 & 1 & n_2-1 & n_3 & \cdots & n_t\\
 \hdashline
0 & x & 0 & 1 & -(n_2-1) & -n_3 & \cdots & -n_t\\
0 & 0 & x & -1 & -(n_2-1) & -n_3 & \cdots & -n_t\\
0 & 1 & -(n_1-1) & x & 0 & -n_3 & \cdots & -n_t\\
0 & -1 & -(n_1-1) & 0 & x & -n_3 & \cdots & -n_t\\
0 & -1 & -(n_1-1) & -1 & -(n_2-1) & x & \cdots & -n_t\\
\vdots & \vdots & \vdots & \vdots & \vdots & \vdots & & \vdots\\
0 & -1 & -(n_1-1) & -1 & -(n_2-1) & -n_3 & \cdots & x\\
\end{array} \right| \\
& = \left| \begin{array}{ccccc:cccc}
1 & 1 & n_1-1 & 1 & n_2-1 & n_3 & \cdots & n_t\\
1 & x+1 & n_1-1 & 2 & 0 & 0 & \cdots & 0\\
1 & 1 & x+n_1-1 & 0 & 0 & 0 & \cdots & 0\\
1 & 2 & 0 & x+1 & n_2-1 & 0 & \cdots & 0\\
1 & 0 & 0 & 1 & x+n_2-1 & 0 & \cdots & 0\\
 \hdashline
1 & 0 & 0 & 0 & 0 & x+n_3 & \cdots & 0\\
\vdots & \vdots & \vdots & \vdots & \vdots & \vdots & & \vdots\\
1 & 0 & 0 & 0 & 0 & 0 & \cdots & x+n_t\\
\end{array} \right| \\
& =\phi_{n_1,\dots, n_t}(x)\Pi_{i=3}^t(x+n_i). 
\end{align*}
It is easy to see that the above partition is equitable, so by Lemma \ref{QM}, the $\lambda_1(\dot\Gamma_{n_1, \dots,  n_t})$ is equal to the  largest root of $\phi_{n_1,\dots, n_t}(x)\Pi_{i=3}^t(x+n_i)=0$. 

If $t=3$ and $n_3=1$, then 
\[
\phi_{n_1,n_2,1}(x)=\frac{x+2}{x+1}h_{n_1,n_2}(x),  
\]
and so $\lambda_1(\dot\Gamma_{n_1, n_2, 1})=\tau_{n_1,n_2}$. 
Let $\Gamma_1=\dot\Gamma_{n_1, \dots, n_t}[V_1\cup V_2\cup \{v_3\}]$, where $v_3\in V_3$. 
So for $t\ge 3$, we have by Lemma \ref{interlace} that 
\[
\lambda_1(\dot\Gamma_{n_1, \dots, n_t})\ge \lambda_1(\Gamma_1)=\tau_{n_1,n_2}.
\]
As $n_1\ge n_2\ge 1$, and 
$\phi_{n_1,n_2,1}(n_2)=-\frac{(n_2+2)(n_2-1)}{n_2+1}\left((n_1-n_2)n_2^2+2(n_1+n_2)(n_2-2)+4\right)\le 0$, we have  $\tau_{n_1,n_2}\ge n_2$.
So $\lambda_1(\dot\Gamma_{n_1, \dots, n_t})\ge n_2$ with $t\ge 3$. 
%
\end{proof}

\section{A lemma on the existence of a non-negative eigenvector}

%
%
The following lemma plays a key role in our proof.

\begin{Lemma}\label{nne}  For positive integers $n_1,\dots, n_t$ with 
 $t\ge 2$, $n_1+\dots+n_t=n\ge 4$ and $(n,t)\ne (4,2)$, and a signed graph $\Gamma=(K_{n_1,\dots, n_t}, \sigma)$ containing  a unique negative edge connecting  $u\in V_i$ and $v\in V_j$ with $|V_i|\ge |V_j|$,
there is a non-negative eigenvector associated with $\lambda_1(\Gamma)$. 
Moreover, if $\mathbf{x}$ is a non-negative eigenvector associated with $\lambda_1(\Gamma)$, then it is positive unless
%
%
$t=2$ and $n_j=2$, or
$t=3$, $n_i\ge 2$, $n_j=n_\ell=1$ with $\ell\in \{1,2,3\}\setminus\{i,j\}$, and $u$ is the unique vertex with $0$ entry.
\end{Lemma}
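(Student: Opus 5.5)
We work with the equitable partition from the proof of Lemma \ref{gnt}, relabelled so that $u\in V_i$ and $v\in V_j$ play the roles of $u_1,u_2$. Since $\lambda_1(\Gamma)$ is simple on the level of the quotient (or, if not simple, we may average over automorphisms fixing the partition), an eigenvector for $\lambda=\lambda_1(\Gamma)$ can be chosen constant on each cell. Write its entries as $a$ at $u$, $b$ on $V_i\setminus\{u\}$, $c$ at $v$, $d$ on $V_j\setminus\{v\}$, and $y_k$ on $V_k$ for $k\ne i,j$.

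\textbf{Eigen-equations.} Put $S=\sum_{k\ne i,j} n_k y_k$. The equations are
\begin{align*}
\lambda a&=-c+(n_j-1)d+S, & \lambda b&=c+(n_j-1)d+S,\\
\lambda c&=-a+(n_i-1)b+S, & \lambda d&=a+(n_i-1)b+S,\\
\lambda y_k&=T-n_ky_k, & &
\end{align*}
where $T=a+(n_i-1)b+c+(n_j-1)d+S$ is the total weighted sum. These give
\[
\lambda(b-a)=2c,\qquad \lambda(d-c)=2a,\qquad y_k=\frac{T}{\lambda+n_k}.
\]

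\textbf{Step 1: $\lambda>0$ and a sign choice.} First show $\lambda>0$. For $t\ge3$ this follows from Lemma \ref{gnt}, since $\lambda\ge n_2\ge1$. For $t=2$ it holds because $\lambda_1>0$ for any nonempty graph, the trace being zero.

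Next normalize so that $T\ge0$; then every $y_k\ge0$. Note that $T\ne 0$ when $t\ge3$: if $T=0$ then all $y_k=0$, $S=0$, and the system collapses to the bipartite one. In that case we compare $\lambda$ with the bipartite index, and Lemma \ref{interlace} shows that $\lambda_1(\Gamma)$ strictly exceeds the bipartite value, a contradiction.

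\textbf{Step 2: the core, showing $a,c\ge0$.} Since $b=a+2c/\lambda$ and $d=c+2a/\lambda$, nonnegativity of $b,d$ follows once $a,c\ge0$. To get this, eliminate $b,d$ and express $a,c$ in terms of $S$ via the $2\times2$ system
\[
\Bigl(\lambda-\tfrac{2(n_j-1)}{\lambda}\Bigr)a+c=\Bigl(n_j-1\Bigr)c+S,
\]
together with its symmetric counterpart. This yields $a$ and $c$ as rational functions of $\lambda,n_i,n_j$ times $S$. Their numerators are, up to positive factors, $\lambda^2+2\lambda-2(n_j-1)\cdot(\text{something})$.

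Using the lower bounds on $\lambda$ (for instance $\lambda\ge\tau_{n_i,n_j}\ge n_j$ from Lemma \ref{gnt}, or the bipartite formula when $t=2$), show these numerators are $\ge0$. Equality should occur exactly in the listed exceptional configurations. Here $|V_i|\ge|V_j|$ is used to guarantee $c\ge a$ direction-wise.

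For $t=2$ we have $S=0$, and the direct computation gives the eigenvector explicitly: $a\propto \lambda^2-2n_j+\dots$, which vanishes iff $n_j=2$. The excluded case $(n,t)=(4,2)$, i.e.\ $C_4$ with one negative edge, has a degenerate (double) $\lambda_1$ and is where the construction fails.

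\textbf{Step 3: positivity and exceptions.} If some entry is zero, trace back through the equations. A zero $y_k$ forces $T=0$, which is excluded. A zero $b$ or $d$ forces $a=c=0$, and then everything vanishes. So only $a$ or $c$ can be zero. Since $\lambda(d-c)=2a$, we get $c=0$ only if ... and analysing the numerator identity gives precisely $t=2,n_j=2$, or $t=3$ with $n_j=n_\ell=1$, $n_i\ge2$, in which case $a=0$ (the entry at $u$).

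For the final claim we also need uniqueness, so that any nonnegative eigenvector is of this form. For $t\ge3$ the underlying graph is connected and $\lambda_1$ is simple; we would verify simplicity by showing that $\lambda_1$ is a simple root of $\phi$ and is not shared by the other eigenvalues $-n_k$ and $0$.

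\textbf{Main obstacle.} The main difficulty is the sign analysis of the numerator polynomials in Step 2. It needs a sufficiently sharp lower bound on $\lambda$, and $\tau_{n_i,n_j}$ may be necessary here rather than $n_j$. It also requires careful identification of the exact equality cases.
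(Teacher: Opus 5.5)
\textbf{There is a genuine gap: the proposal sets up the right system but does not prove the inequality it reduces to.}

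Your parametrization by $S$ is a legitimate variant of the paper's route. The paper instead parametrizes by $y_1$, proves sign relations valid for every eigenvector, and then applies the Rayleigh argument to $|\mathbf{y}|$. Solving your $2\times 2$ system correctly gives $\phi_{n_i,n_j}(\lambda)\,a=\lambda P_a(\lambda)S$ and $\phi_{n_i,n_j}(\lambda)\,c=\lambda P_c(\lambda)S$, where $P_a(x)=x^2+(n_j-2)x-2(n_i-1)$, $P_c(x)=x^2+(n_i-2)x-2(n_j-1)$, and $\phi_{n_i,n_j}(x)=x^4-n_in_jx^2+4(n_i-1)(n_j-1)$ is $\lambda^2$ times your determinant. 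The numerators are not of the form you wrote. Your remark that $|V_i|\ge|V_j|$ gives $c\ge a$ can be made precise as $P_c-P_a=(n_i-n_j)(\lambda+2)\ge 0$, which would replace Claim \ref{Y3}.

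The proof stops exactly where the lemma's content lies. Everything hinges on $P_a(\lambda_1)\ge 0$, with equality iff $t=3$ and $n_j=n_\ell=1$, and you only assert that this ``should'' follow from lower bounds on $\lambda_1$. The bound $\lambda_1\ge n_j$ does not suffice:
\begin{itemize}
\item For $n_j=1$ you need $\lambda_1\ge\alpha=\frac12\bigl(1+\sqrt{8n_i-7}\bigr)$. Equality holds exactly when $t=3$ and $n_\ell=1$, where $\lambda_1=\tau_{n_i,1}=\alpha$. In every other configuration the strict inequality needs a further comparison; the paper evaluates the quotient polynomials of $\Gamma[V_i\cup V_j\cup\{w,w'\}]$ at $\alpha$.
\item For $n_j=2$ you need $\lambda_1^2>2(n_i-1)$, obtained from $h_{n_i,2}(\sqrt{2n_i-2})<0$ when $n_i\ge 3$.
\item For $n_j\ge 3$ you need an estimate such as $x^2P_a(x)>h_{n_i,n_j}(x)$ for $x\ge n_j$, combined with $\lambda_1\ge\tau_{n_i,n_j}$.
\end{itemize}
This is Claim \ref{Y2} and is the heart of the proof. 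Without it, neither the existence of a non-negative eigenvector nor the list of exceptional cases is established, and your Step 3 (``$c=0$ only if \dots'') is literally blank.

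\textbf{The sign of the denominator is also missing.} To turn $P_a,P_c\ge 0$ into $a,c\ge 0$ you need $\phi_{n_i,n_j}(\lambda_1)>0$. Since interlacing gives $\lambda_1\ge$ the index of $\Gamma[V_i\cup V_j]$, this is the same as $\lambda_1(\Gamma)$ being strictly larger than that index. You never mention it, and it is also what your exclusion of $T=0$ rests on; once it holds, $T\ne 0$ is automatic. Lemma \ref{interlace} gives only a non-strict inequality. The negative triangle shows the strict version genuinely depends on $n\ge 4$: it has index $1$, the same as $K_2^-$, and the eigenvector $(1,-1,0)^\top$ has $T=0$.

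\textbf{Smaller points.}
\begin{itemize}
\item Eigenvectors are constant on the cells because the vertices of each cell have identical rows, so $\lambda x_w=\lambda x_{w'}$ for every eigenvector once $\lambda\ne 0$. Averaging over automorphisms is the wrong justification, since it could return the zero vector.
\item This makes your uniqueness step superfluous. Your identities hold for every eigenvector, hence for an arbitrary non-negative $\mathbf{x}$ in the ``moreover'' part.
\item The justification you give for uniqueness, that connectedness makes $\lambda_1$ simple, is false for signed graphs. Perron--Frobenius is unavailable, and the connected negative triangle has $\lambda_1=1$ with multiplicity two.
\item For $t=2$ the correct relation is $(\lambda_1^2-2)a=\bigl(\lambda_1^2-2(n_i-1)\bigr)b$, with $n_i$ rather than $n_j$. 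So $a=0$ iff $n_j=2$. Positivity for $n_j\ge 3$ still requires $\lambda_1^2>2(n_i-1)$, which you read off from the explicit formula for $\lambda_1(\Gamma_{n_i,n_j})$.
\end{itemize}
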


\begin{proof} Assume that $n_1\ge n_2$, $i=1$,  $j=2$,  $u=u_1$  and $v=v_1$. Then 
$\Gamma=\dot\Gamma_{n_1, \dots, n_t}$. Recall that $u_1u_2$ is the only negative edge of $\dot\Gamma_{n_1, \dots, n_t}$, where  $u_1\in V_1$, $u_2\in V_2$. 

Suppose first that $n=4, 5$ and $3\le t\le n$. The nonisomorphic ones in $\{\dot\Gamma_{n_1, \dots, n_t}: 4\le n\le 5\}$ are shown in Fig.~\ref{f1}, where solid edges represent positive edges while dashed edges represent negative edges. For each such signed graph, the index is listed below, and an eigenvector affording the index is illustrated via the valuations beside the vertices.
Each one in  Fig.~\ref{f1} has a non-negative eigenvector associated with the index, which are positive except for $\dot\Gamma_{n_1, 1, 1}$ with $n_1=2,3$ that contain exactly one zero entry at $u_1$. So the results follow for $n=4, 5$ and $3\le t\le n$.

\begin{figure}[htbp]
\centering
\begin{tikzpicture}
\filldraw [black] (0,2) circle (2pt);
\filldraw [black] (0,1) circle (2pt);
\filldraw [black] (0,0) circle (2pt);
\filldraw [black] (0,-1) circle (2pt);
\draw  [red,line width=0.8pt][dashed] (0,2)--(0,1);
\node at (0,2.3) {$u_1$};
\draw  [black](0,1)--(0,0)--(0,-1);
\draw [black](0,2) .. controls (-0.5,1.2) and (-0.5,0.8).. (0,0);
\draw [black](0,2) .. controls (-0.8,1) and (-0.8,0).. (0,-1);
\draw [black](0,1) .. controls (-0.5,0.2) and (-0.5,-0.2).. (0,-1);
\node at (0, -1.6) {$\dot\Gamma_{1,1,1,1}$};
\node at (0, -2.2) {$\lambda_1=\sqrt{5}$};
\node at (0.2, 2) {$1$};
\node at (0.2, 1) {$1$};
\node at (0.5, 0) {$\frac{2}{\lambda_1-1}$};
\node at (0.5, -1) {$\frac{2}{\lambda_1-1}$};
\end{tikzpicture} \ \ \ \ \ \ \ \
\begin{tikzpicture}
\filldraw [black] (0,2.3) circle (2pt);
\filldraw [black] (0,1) circle (2pt);
\filldraw [black] (-0.7,0) circle (2pt);
\filldraw [black] (0.7,0) circle (2pt);
\draw  [red,line width=0.8pt][dashed] (0,2.3)--(0,1);
\node at (0,2.6) {$u_1$};
\draw  [black](0,1)--(-0.7,0)--(0,2.3)--(0.7,0);
\draw  [black](0,1)--(0.7,0);
\node at (0, -1) {$\dot\Gamma_{1,1,2}$};
\node at (0, -1.6) {$\lambda_1=\frac{\sqrt{17}-1}{2}$};
\node at (0.2, 2.3) {$1$};
\node at (0.2, 1) {$1$};
\node at (1, 0) {$\frac{2}{\lambda_1}$};
\node at (-1, 0) {$\frac{2}{\lambda_1}$};
\end{tikzpicture} \ \ \ \ \ \ \ \
\begin{tikzpicture}
\filldraw [black] (-0.2,2.3) circle (2pt);
\filldraw [black] (0.5,1.3) circle (2pt);
\filldraw [black] (0.5,0) circle (2pt);
\filldraw [black] (1.2,2.3) circle (2pt);
\draw  [red,line width=0.8pt][dashed] (-0.2,2.3)--(0.5,1.3);
\node at (-0.2,2.6) {$u_1$};
\draw  [black](0.5,1.3)--(0.5,0)--(1.2,2.3)--(0.5,1.3);
\draw  [black](-0.2,2.3)--(0.5,0);
\node at (0.5, -1.1) {$\dot\Gamma_{2,1,1}$};
\node at (0.5, -1.7) {$\lambda_1=2$};
\node at (-0.4, 2.3) {$0$};
\node at (1.4, 2.3) {$1$};
\node at (0.7, 1.3) {$1$};
\node at (0.7, 0) {$1$};
\end{tikzpicture} \ \ \ \ \ \ \ \
\begin{tikzpicture}
\filldraw [black] (0,2) circle (2pt);
\filldraw [black] (-0.8,1) circle (2pt);
\filldraw [black] (0.8,1) circle (2pt);
\filldraw [black] (-0.5,0) circle (2pt);
\filldraw [black] (0.5,0) circle (2pt);
\draw  [red,line width=0.8pt][dashed] (0,2)--(-0.8,1);
\node at (0,2.3) {$u_1$};
\draw  [black](-0.8,1)--(-0.5,0)--(0.5,0)--(0.8, 1)--(0,2)--(-0.5,0)--(0.8, 1)--(-0.8,1)--(0.5,0)--(0,2);
\node at (0, -1.2) {$\dot\Gamma_{1,1,1,1,1}$};
\node at (0, -1.8) {$\lambda_1=\frac{1+\sqrt{33}}{2}$};
\node at (0.2, 2) {$1$};
\node at (-1, 1) {$1$};
\node at (1.3, 1) {$\frac{2}{\lambda_1-2}$};
\node at (-1, 0) {$\frac{2}{\lambda_1-2}$};
\node at (1, 0) {$\frac{2}{\lambda_1-2}$};
\end{tikzpicture} \ \ \ \ \ \ \ \
\begin{tikzpicture}
	\filldraw [black] (0.5,2) circle (2pt);
	\filldraw [black] (0.5,1) circle (2pt);
	\filldraw [black] (0.5,0) circle (2pt);
	\filldraw [black] (1,-1) circle (2pt);
	\filldraw [black] (0,-1) circle (2pt);
	\draw  [red,line width=0.8pt][dashed] (0.5,2)--(0.5,1);
	\node at (0.5,2.3) {$u_1$};
	\draw [black](0.5,2) .. controls (0.2,1.2) and (0.2,0.8).. (0.5,0);
	\draw  [black](0.5,1)--(0.5,0)--(0,-1)--(0.5,1);
	\draw  [black](0.5,0)--(1,-1)--(0.5,1);
	\draw [black](0.5,2) .. controls (1.2,1) and (1.2,-0.2).. (1,-1);
	\draw [black](0.5,2) .. controls (-0.2,1) and (-0.2,-0.2).. (0,-1);
	\node at (0.5, -1.7) {$\dot\Gamma_{1,1,1,2}$};
	\node at (0.5, -2.6) {
		$\begin{array}{c}
			\mbox{$\lambda_1$ is the largest root of} \\
			\lambda_1^3 + \lambda_1^2 - 8\lambda_1 - 10 = 0
		\end{array}$
	};
	\node at (0.8, 2) {$1$};
	\node at (0.8, 1) {$1$};
	\node at (1.1, 0) {$\frac{\lambda_1+3}{\lambda_1+1}$};
	\node at (-0.7, -1) {$\frac{3\lambda_1+5}{\lambda_1^2+\lambda_1}$};
	\node at (1.7, -1) {$\frac{3\lambda_1+5}{\lambda_1^2+\lambda_1}$};
\end{tikzpicture} \ \ \ \ \ \ \ \
\begin{tikzpicture}
	\filldraw [black] (0,2) circle (2pt);
	\filldraw [black] (0.5,1) circle (2pt);
	\filldraw [black] (0.5,0) circle (2pt);
	\filldraw [black] (1,2) circle (2pt);
	\filldraw [black] (0.5,-1) circle (2pt);
	\draw  [red,line width=0.8pt][dashed] (0,2)--(0.5,1);
	\node at (0,2.3) {$u_1$};
	\draw  [black](0.5,1)--(0.5,0)--(1,2)--(0.5,1);
	\draw  [black](0,2)--(0.5,0);
	\draw  [black](0.5,-1)--(0.5,0);
	\draw [black](0.5,1) .. controls (0.15,0.2) and (0.15,-0.2).. (0.5,-1);
	\draw [black](1,2) .. controls (1.2,1) and (1.2,-0.2).. (0.5,-1);
	\draw [black](0,2) .. controls (-0.2,1) and (-0.2,-0.2).. (0.5,-1);
	\node at (0.5, -1.7) {$\dot\Gamma_{2,1,1,1}$};
	\node at (0.5, -2.6) {
		$\begin{array}{c}
			\mbox{$\lambda_1$ is the largest root of}\\ \lambda_1^3-2\lambda_1^2-6\lambda_1+8=0
		\end{array}$
	};
	\node at (-0.8, 1.95){$\frac{\lambda_1^2-\lambda_1-2}{\lambda_1^2}$};
	\node at (1.8, 1.95){$\frac{\lambda_1^2+\lambda_1-2}{\lambda_1^2}$};
	\node at (0.7, 1){$1$};
	\node at (1, 0){$\frac{\lambda_1^2-2}{2\lambda_1}$};
	\node at (1, -1){$\frac{\lambda_1^2-2}{2\lambda_1}$};
\end{tikzpicture} \ \ \ \ \ \ \ \
\begin{tikzpicture}
	\filldraw [black] (0,2) circle (2pt);
	\filldraw [black] (0,1) circle (2pt);
	\filldraw [black] (0,0) circle (2pt);
	\filldraw [black] (-1,0) circle (2pt);
	\filldraw [black] (1,0) circle (2pt);
	\draw  [red,line width=0.8pt][dashed] (0,2)--(0,1);
	\node at (0,2.3) {$u_1$};
	\draw  [black](0,1)--(0,0);
	\draw  [black](0,1)--(-1,0)--(0,2)--(1,0)--(0,1);
	\draw [black](0,2) .. controls (-0.3,1.2) and (-0.3,0.8).. (0,0);
	\node at (0, -1.8) {$\dot\Gamma_{1,1,3}$};
	\node at (0, -2.4) {$\lambda_1=2$};
	\node at (0.2, 2) {$1$};
	\node at (0.2, 1) {$1$};
	\node at (-1.3, 0) {$\frac{2}{\lambda_1}$};
	\node at (0.3, 0) {$\frac{2}{\lambda_1}$};
	\node at (1.3, 0) {$\frac{2}{\lambda_1}$};
\end{tikzpicture}
\begin{tikzpicture}
	\filldraw [black] (0,2) circle (2pt);
	\filldraw [black] (0,0) circle (2pt);
	\filldraw [black] (0.8,1) circle (2pt);
	\filldraw [black] (1.6,2) circle (2pt);
	\filldraw [black] (1.6,0) circle (2pt);
	\draw  [red,line width=0.8pt][dashed] (0,2)--(0.8,1);
	\node at (0,2.3) {$u_1$};
	\draw  [black](0,0)--(0.8,1)--(1.6,0)--(1.6,2)--(0.8,1);
	\draw  [black](0,0)--(0,2);
	\draw [black](0,2) .. controls (0.3,1.2) and (0.3,0.8).. (1.6,0);
	\draw [black](1.6,2) .. controls (0.8,1.7) and (0.1,0.8).. (0,0);
	\node at (0.8, -1) {$\dot\Gamma_{2,1,2}$};
	\node at (0.8, -1.6) {$\lambda_1=\sqrt{4+2\sqrt{2}}$};
	\node at (-1, 1.95) {$\frac{-\lambda_1^2+2\lambda_1+4}{\lambda_1^2}$};
	\node at (2.5, 2) {$\frac{\lambda_1^2+2\lambda_1-4}{\lambda_1^2}$};
	\node at (1.4, 1) {$\frac{\lambda_1^2-4}{\lambda_1}$};
	\node at (-0.2, 0) {$1$};
	\node at (1.8, 0) {$1$};
\end{tikzpicture} \ \ \ \ \ \ \ \
\begin{tikzpicture}
	\filldraw [black] (0,2) circle (2pt);
	\filldraw [black] (0,1) circle (2pt);
	\filldraw [black] (0.8,0) circle (2pt);
	\filldraw [black] (1.6,2) circle (2pt);
	\filldraw [black] (1.6,1) circle (2pt);
	\draw  [red,line width=0.8pt][dashed] (0,2)--(0,1);
	\node at (0,2.3) {$u_1$};
	\draw  [black](0,1)--(0.8,0)--(1.6,1)--(1.6,2)--(0,1);
	\draw  [black](0.8,0)--(1.6,2);
	\draw  [black](0.8,0)--(0,2)--(1.6,1);
	\node at (0.8, -1) {$\dot\Gamma_{2,2,1}$};
	\node at (0.8, -1.6) {$\lambda_1=1+\sqrt{3}$};
	\node at (-0.2, 2) {$1$};
	\node at (2.1, 2) {$\frac{\lambda_1+2}{\lambda_1}$};
	\node at (-0.2, 1) {$1$};
	\node at (2.1, 1) {$\frac{\lambda_1+2}{\lambda_1}$};
	\node at (1.3, 0) {$\frac{\lambda_1^2-2}{\lambda_1}$};
\end{tikzpicture} \ \ \ \ \ \ \ \
\begin{tikzpicture}
\filldraw [black] (0,2) circle (2pt);
\filldraw [black] (1,1) circle (2pt);
\filldraw [black] (1,0) circle (2pt);
\filldraw [black] (1,2) circle (2pt);
\filldraw [black] (2,2) circle (2pt);
\draw  [red,line width=0.8pt][dashed] (0,2)--(1,1);
\node at (0,2.3) {$u_1$};
\draw  [black](1,1)--(1,0)--(1,2)--(1,1)--(2,2);
\draw  [black](0,2)--(1,0)--(2,2);
\draw [black](1,2) .. controls (1.3,1.2) and (1.3,0.8).. (1,0);
\node at (1, -0.8) {$\dot\Gamma_{3,1,1}$};
\node at (1, -1.4) {$\lambda_1=\frac{1+\sqrt{17}}{2}$};
\node at (-0.2, 2) {$0$};
\node at (0.8, 2) {$1$};
\node at (2.2, 2) {$1$};
\node at (0.7, 1) {$\frac{\lambda_1}{2}$};
\node at (1.3, 0) {$\frac{\lambda_1}{2}$};
\end{tikzpicture}
\caption{Signed graphs $\dot\Gamma_{n_1, \dots, n_t}$  of order $4\le n\le 5$ and $3\le t\le n$.}
\label{f1}
\end{figure}

Suppose next that $n\ge 5$ and $t= 2$, or  $n\ge 6$ and $t\ge 3$.

Let $\mathbf{y}$ be any unit eigenvector associated with $\lambda_1(\dot\Gamma_{n_1, \dots, n_t})$.  
From $A(\dot\Gamma_{n_1, \dots, n_t})\mathbf{y}=\lambda_1(\dot\Gamma_{n_1, \dots, n_t})\mathbf{y}$, 
the entries at each vertex of $V_1\setminus \{u_1\}$ are equal, which we denote $y_1$. Similarly, denote by $y_2$ the entry at each vertex of $V_2\setminus \{u_2\}$, and  $y_i$ the entry at each vertex of $V_i$ for $i=3,\dots, t$, so 
\[
\mathbf{y}=(y_{u_1}, \underbrace{y_1,\dots,y_1}_{n_1-1}, y_{u_2}, \underbrace{y_2,\dots,y_2}_{n_2-1}, \underbrace{y_3,\dots,y_3}_{n_3}, \dots, \underbrace{y_t,\dots,y_t}_{n_t})^\top.
\]
For convenience, we write $\lambda_1$ for $\lambda_1(\dot\Gamma_{n_1, \dots, n_t})$.
Note that
\begin{align}
\lambda_1 y_{u_1}=-y_{u_2}+(n_2-1)y_2+\sum_{i=3}^tn_iy_i, \label{eq1}\\
\lambda_1 y_1=y_{u_2}+(n_2-1)y_2+\sum_{i=3}^tn_iy_i, \label{eq2}\\
\lambda_1 y_{u_2}=-y_{u_1}+(n_1-1)y_1+\sum_{i=3}^tn_iy_i, \label{eq3}\\
\lambda_1 y_2=y_{u_1}+(n_1-1)y_1+\sum_{i=3}^tn_iy_i. \label{eq4}
\end{align}
From $\eqref{eq1}$ and $\eqref{eq2}$, we have
\begin{equation}\label{v0}
y_{u_2}=\frac{\lambda_1}{2}(y_1-y_{u_1}).
\end{equation} 

We consider the cases $t=2$ and $t\ge 3$ seperately. 

Suppose  that $t=2$. 

Obviously, $n_2\ge 2$, and $n_1\ge 3$.  By \eqref{v0} and \eqref{eq3},  $\lambda_1^2(y_1-y_{u_1})=-2y_{u_1}+2(n_1-1)y_1$, i.e., 
\begin{equation}\label{add1}
y_{u_1}=\frac{\lambda_1^2-2(n_1-1)}{\lambda_1^2-2}y_1. 
\end{equation}
By Lemma \ref{gnt}, $\lambda_1$ is the largest root of $\phi_{n_1,n_2}(x)=0$ with $\phi_{n_1.n_2}(x)=x^4-n_1n_2x^2+4(n_1-1)(n_2-1)$, and so
\[
\lambda_1^2=\frac{n_1n_2}{2}+\frac{1}{2}\sqrt{n_1^2n_2^2-16(n_1-1)(n_2-1)}\ge 2(n_1-1)
\]
with equality if and only if $n_2=2$. This also implies that $\lambda_1^2>2$.  
If $n_2=2$, then from \eqref{add1}, 
 $y_{u_1}=0$,  from \eqref{eq1}, $y_{u_2}=y_2$, and from \eqref{v0},
 $y_1=\frac{2}{\lambda_1}y_{u_2}$, so 
 \[
\mathbf{y}=y_{u_2}\bigg(0,\underbrace{\frac{2}{\lambda_1},\dots,\frac{2}{\lambda_1}}_{n_1-1},\underbrace{1,\dots,1}_{n_2}\bigg)^\top,\]
which, by Chosing a positive $y_{u_2}$,   is non-negative with a unique $0$ entry at $u_1$. 
Suppose that $n_2\ge 3$. 
If $y_1=0$, then by \eqref{add1}, we have $y_{u_1}=0$, and so $y_{u_2}=y_2=0$ by \eqref{eq3} and \eqref{eq4}, respectively. Thus $\mathbf{y}=\mathbf{0}$, a contradiction. 
So $y_1\ne 0$. 
From \eqref{add1}, we have $y_1y_{u_1}>0$ and $|y_1|>|y_{u_1}|$. Assume that $y_1>0$. From \eqref{v0}, $y_{u_2}>0$. 
Similarly as above, we have $y_{u_1}=\frac{\lambda_1}{2}(y_2-y_{u_2})$ from 
\eqref{eq3} and \eqref{eq4},
and so 
\[
y_{u_2}=\frac{\lambda_1^2-2(n_2-1)}{\lambda_1^2-2}y_2
\]
from  \eqref{eq1}. Then $y_2>y_{u_2}>0$. Now from \eqref{eq1},  $y_{u_1}>0$, and $y_1>0$. 
So $\mathbf{y}$ is positive.  Then the result follows for $t=2$.

Suppose in the following  that $t\ge 3$. Then $n\ge 6$ and  $\dot\Gamma_{n_1, \dots, n_t}$ contains some signed induced subgraph of order $5$ in Fig. 1.  From Fig. 1, each such signed graph has index larger than $2$ except for $\lambda_1(\dot\Gamma_{1,1,3})=2$.
Then by Lemma \ref{interlace} and $\lambda_1(\dot\Gamma_{1,1,4})=2.3723> 2$, we have $\lambda_1(\dot\Gamma_{n_1, \dots, n_t})> 2$.

\begin{Claim} \label{Y1}
$y_1y_2\ge 0$ 
with equality if and only if $y_1=y_2=0$. 
\end{Claim}

\begin{proof}
From $y_{u_1}=\frac{\lambda_1}{2}(y_2-y_{u_2})$ and \eqref{v0}, we have
\begin{equation}\label{u0}
y_{u_1}=\frac{\lambda_1}{2}\left(y_2-y_{u_2}\right)=\frac{2\lambda_1y_2-\lambda_1^2y_1}{4-\lambda_1^2}.
\end{equation}
By adding $\eqref{eq1}$ to $\eqref{eq2}$ and adding $\eqref{eq3}$ to $\eqref{eq4}$, we have
\[
\frac{\lambda_1}{2}(y_1+y_{u_1})-(n_2-1)y_2=\sum_{i=3}^tn_iy_i=\frac{\lambda_1}{2}(y_2+y_{u_2})-(n_1-1)y_1, \]
i.e.,
\begin{equation}\label{eq7}
\lambda_1y_1+\lambda_1y_{u_1}+2(n_1-1)y_1-\lambda_1y_{u_2}=(\lambda_1+2n_2-2)y_2.
\end{equation}
Substituting $\eqref{v0}$ and $\eqref{u0}$ to $\eqref{eq7}$, we have
\[
\left((\lambda_1-2)(\lambda_1+2n_1-2)+\lambda_1^2\right)y_1=\left(2(\lambda_1-2)(\lambda_1+n_2-1)+2\lambda_1\right)y_2,
\]
i.e., 
\begin{equation}\label{y2}
y_2 
 =\frac{\lambda_1(\lambda_1-1)+(n_1-1)(\lambda_1-2)}{\lambda_1(\lambda_1-1)+(n_2-1)(\lambda_1-2)}y_1.
\end{equation}
As $\lambda_1>2$, we have  $\lambda_1(\lambda_1-1)+(n_1-1)(\lambda_1-2)>0$ and $(\lambda_1-2)(\lambda_1+n_2-1)+\lambda_1>0$. 
So the claim follows.
\end{proof}

\begin{Claim} \label{Y2}
 $y_{u_1}=0$ if $t=3$ and $n_2=n_3=1$, and 
$y_{u_1}y_1\ge 0$
with equality if and only if $y_{u_1}=y_1=0$ otherwise.
\end{Claim}

\begin{proof}
Let $f(x)=x^2+(n_2-2)x-2n_1+2$. 
From \eqref{u0} and \eqref{y2}, we have
\[
y_{u_1}=\frac{\lambda_1f(\lambda_1)}{(\lambda_1+2)(\lambda_1(\lambda_1-1)+(n_2-1)(\lambda_1-2))}y_1
\]
As $\lambda_1>2$,  $(\lambda_1+2)((\lambda_1-2)(\lambda_1+n_2-1)+\lambda_1)>0$. To prove the claim, it suffices to show that 
 $f(\lambda_1)\ge 0$ with equality if and only if $t=3$ and $n_2=n_3=1$.

If $n_1=1$, then $n_2=1$, and $f(\lambda_1)=\lambda_1(\lambda_1-1)>0$. 

Suppose that $n_1\ge 2$. 

\noindent{\bf Case 1.} $n_2=1$. 

Note that the largest root of $f(x)=0$ is $\alpha:=\frac{1}{2}\left(\sqrt{8n_1-7}+1\right)$. 
 
If $t=3$ and $n_3=1$, then by Lemma \ref{gnt} (ii), $\lambda_1=\tau_{n_1,1}=\alpha$, so $f(\lambda_1)=0$. 

Suppose that $t\ge 4$, or $t=3$ and $n_3\ge 2$. Let $\Gamma_1=\dot \Gamma_{n_1,\dots, n_t}[V_1\cup V_2\cup \{v_3, v_4\}]$ with $v_i\in V_i$ for $i=3,4$  if $t\ge 4$, and  $\Gamma_2=\dot \Gamma_{n_1,\dots, n_t}[V_1\cup V_2\cup \{v_3, v_3'\}]$ with $v_3, v_3'\in V_3$ if $t=3$ and $n_3\ge 2$. 
Then 
\[
\begin{pmatrix}
0 & 0 & -1 & 2 \\
0 & 0 & 1 & 2 \\
-1 & n_1-1 & 0 & 2 \\
1 & n_1-1 & 1 & 1 
\end{pmatrix}\ \ \ \ 
\text{and} \ \ \ \ 
\begin{pmatrix}
	0 & 0 & -1 & 2 \\
	0 & 0 & 1 & 2 \\
	-1 & n_1-1 & 0 & 2 \\
	1 & n_1-1 & 1 & 0 
\end{pmatrix},
\]
are the quotient matrices of $\Gamma_1$ and $\Gamma_2$, respectively.  Correspondingly, their characteristic polynomials are 
\[
f_1(x)=x^4-x^3-(3n_1+2)x^2+(8-3n_1)x+8n_1-8
\] 
and 
\[
f_2(x)=x^4-(3n_1+2)x^2-4(n_1-2)x+8n_1-8,
\]
respectively. 
Note that 
\[
f_1(\alpha)=-2(n_1-1)\left(n_1+\sqrt{8n_1-7}+1\right)<0
\]
and
\[
f_2(\alpha)=-\frac{1}{2}(n_1-1)\left(4n_1+3\sqrt{8n_1-7}-1\right)<0.
\]
By Lemma \ref{QM}, $\lambda_1(\Gamma_i)>\alpha$ for $i=1, 2$. 
By Lemma \ref{interlace},  $\lambda_1\ge \lambda_1(\Gamma_1)$ if $t\ge 4$, and 
 $\lambda_1\ge \lambda_1(\Gamma_2)$ if $t=3$ and $n_3\ge 2$. In either case,
 $\lambda_1>\alpha$,   so $f(\lambda_1)>0$.

\noindent{\bf Case 2.} $n_2=2$.

Then $f(x)=x^2-2n_1+2$.  By Lemma 2.7 (ii), $\lambda_1$ is larger than or equal to the largest root of 
$h_{n_1,2}=0$.
Note that for $n_1\ge 3$, 
\[
h_{n_1,2}\left(\sqrt{2n_1-2}\right)=-2\left((n_1-1)(n_1-2)+(n_1-2)\sqrt{2n_1-2}\right)<0. 
\]
So  $\lambda_1>\sqrt{2n_1-2}$, implying that $f(\lambda_1)>0$. 

Suppose that $n_1=2$. Then $f(x)=x^2-2$. As $n\ge 6$, $t\ge 4$ or $n_3\ge 2$ and $t=3$. 
Let $\Gamma_1=\dot\Gamma_{n_1, \dots,  n_t}[V_1\cup V_2\cup \{v_3, v_4\}]$ with $v_i\in V_i$ for $i=3, 4$ if $t\ge 4$, and $\Gamma_2=\dot\Gamma_{n_1, \dots,  n_t}[V_1\cup V_2\cup \{v_3, v_3'\}]$ with $v_3, v_3'\in V_3$ if $n_3\ge 2$ and $t=3$. Then 
\[
\begin{pmatrix}
	0 & 0 & -1 & 1 & 2 \\
	0 & 0 & 1 & 1 & 2 \\
	-1 & 1 & 0 &  0 & 2 \\
	1 & 1 & 0 &  0 & 2 \\
	1 & 1 & 1 & 1 & 1 
\end{pmatrix}\ \ \ \ 
\text{and} \ \ \ \ 
\begin{pmatrix}
	0 & 0 & -1 & 1 & 2 \\
	0 & 0 & 1 & 1 & 2 \\
	-1 & 1 & 0 & 0 & 2 \\
	1 & 1 & 0 & 0 & 2 \\
	1 & 1 & 1 & 1 & 0 
\end{pmatrix}
\]
are the quotient matrices of $\Gamma_1$ and $\Gamma_2$, respectively. Correspondingly, their characteristic polynomials are 
\[
f_1(x)=(x^2-2)(x^3-x^2-10x-6)
\]
and 
\[
f_2(x)=(x^2-2)(x^3-10x-8). 
\]
By a direct calculation, the largest root of $f_i(x)=0$ is larger than $\sqrt{2}$ for $i=1,2$. 
By Lemma \ref{QM}, $\lambda_1(\Gamma_i)>\sqrt{2}$ for $i=1,2$. 
By Lemma \ref{interlace}, $\lambda_1\ge \lambda_1(\Gamma_1)$ if $t\ge 4$, and $\lambda_1\ge \lambda_1(\Gamma_2)$ if $t=3$ and $n_3\ge 2$. 
In either case, $\lambda_1>\sqrt{2}$, so $f(\lambda_1)>0$.

\noindent{\bf Case 3.} $n_2\ge 3$. 

For $x\ge n_2$, let $g(x)=h_{n_1,n_2}(x)-x^2f(x)$, i.e., 
\[
g(x)= -n_2x^3+(n_1-n_2-n_1n_2+2)x^2+2(n_1+n_2-2)x-4(n_1+n_2-n_1n_2-1).
\]
As 
\begin{align*}
g'(x)& =-3n_2x^2+2(n_1-n_2-n_1n_2+2)x+2(n_1+n_2-2)\\
& \le g'(n_2)=-3n_2^3-2(n_1+1)n_2^2+2(n_1+3)n_2+2n_1-4\\
& <0, 
\end{align*}
we have
\[
g(x)\le g(n_2)=-((n_2^3-n_2^2-6n_2+4)n_1+n_2^4+n_2^3-4n_2^2+8n_2-4)<0. 
\]
So $x^2f(x)> h_{n_1, n_2}(x)$. 
By Lemma \ref{gnt} (ii), we have $f(\lambda_1)>0$.
\end{proof}

\begin{Claim}\label{Y3}
$y_{u_2}y_1\ge 0$ with equality if and only if $y_1=y_{u_2}=0$. 
\end{Claim}

\begin{proof}
Eliminating $y_{u_1}$ from \eqref{v0} and \eqref{u0}, we have
\[
\left(1-\frac{\lambda_1^2}{4}\right)y_{u_2}=\frac{\lambda_1}{2}y_1-\frac{\lambda_1^2}{4}y_2.
\]
So, from \eqref{y2}, we have
\begin{align*}
\left(4-\lambda_1^2\right)y_{u_2}& =\left(2\lambda_1-\frac{\lambda_1^2\left(\lambda_1(\lambda_1-1)+(n_1-1)(\lambda_1-2)\right)}{\lambda_1(\lambda_1-1)+(n_2-1)(\lambda_1-2)}\right)y_1\\
& =-\frac{\lambda_1(\lambda_1-2)(\lambda_1^2+(n_1-2)\lambda_1-2n_2+2)}{\lambda_1(\lambda_1-1)+(n_2-1)(\lambda_1-2)}y_1.
\end{align*}
Evidently, $\lambda_1(\lambda_1-2)>0$ and $(\lambda_1-2)(\lambda_1+n_2-1)+\lambda_1>0$. To prove the claim, it suffices to show that $\lambda_1^2+(n_1-2)\lambda_1-2n_2+2>0$.

 If $n_1=1$, then $n_2=1$, so $\lambda_1^2+(n_1-2)\lambda_1-2n_2+2=\lambda_1^2-\lambda_1>0$. 
Suppose that $n_1\ge 2$. By Lemma \ref{gnt} (ii), $\lambda_1\ge n_2$, so
\[
\lambda_1^2+(n_1-2)\lambda_1-2n_2+2\ge n_2^2+(n_1-2)n_2-2n_2+2\ge 2n_2^2-4n_2+2=2(n_2-1)^2\ge 0  
\]
with equalities if and only if $\lambda_1=n_2=n_1$ and $n_2=1$, contradicting the fact that $\lambda_1>2$. So 
 $\lambda_1^2+(n_1-2)\lambda_1-2n_2+2>0$ in any case.
\end{proof}

By Claims \ref{Y1}, \ref{Y2} and \ref{Y3}, $y_{u_1}=0$, $y_1, y_{u_2}, y_2$ have the same sign if $t=3$ and $n_2=n_3=1$ and $y_{u_1}, y_1, y_{u_2}, y_2$ have the same sign otherwise.

Let $|\mathbf{y}|$ the vector obtained from $\mathbf{y}$ by replacing any negative entry by its opposite number. 
By Rayleigh's principle, 
\begin{align*}
0 & \ge |\mathbf{y}|^\top A(\dot\Gamma_{n_1, \dots, n_t})|\mathbf{y}|-\mathbf{y}^\top A(\dot\Gamma_{n_1, \dots, n_t})\mathbf{y} = 2\sum_{v_iv_j\in E(\dot\Gamma_{n_1, \dots, n_t})}\sigma(v_iv_j)\left(|y_i||y_j|-y_iy_j\right)\\
& = 2\left(\sum_{v_iv_j\in E(\dot\Gamma_{n_1, \dots,  n_t})\setminus\{u_1u_2\}}\left(|y_iy_j|-y_iy_j\right)-\left(|y_{u_1}y_{u_2}|-y_{u_1}y_{u_2}\right)\right)\\
& =2\sum_{v_iv_j\in E(\Gamma_{n_1, \dots, n_t})\setminus\{u_1u_2\}}\left(|y_iy_j|-y_iy_j\right)\\
& \ge 0.
\end{align*}
So the above inequalities are equalities, implying that $|\mathbf{y}|$ is also an eigenvector associated with $\lambda_1(\dot\Gamma_{n_1, \dots, n_t})$. 
Thus there exists a non-negative eigenvector associated with $\lambda_1(\dot\Gamma_{n_1, \dots, n_t})$. This proves the first part.

By the above proof, we also have: 

(i)  if $t=2$, $n_2=2$ and $\mathbf{y}$ is non-negative, then   $\mathbf{y}$ has a unique zero entry at $u_1$;

(ii)  if $t=2$, $n_2\ge 3$ and $\mathbf{y}$ is non-negative, then   $\mathbf{y}$ is positive;

(iii) if $t=3$ and $n_2=n_3=1$, then $y_{u_1}=0$, $y_1, y_{u_2}, y_2$ have the same sign; otherwise, $y_{u_1}, y_1, y_{u_2}, y_2$ have the same sign.

To prove the second part, from (i) and (ii), we can assume that $t\ge 3$ and  $\mathbf{y}$ is a non-negative eigenvector associated with $\lambda_1(\dot\Gamma_{n_1, \dots, n_t})$.

We claim that  $y_1>0$. Otherwise,  $y_1=0$. 
Then $y_{u_1}=y_2=y_{u_2}=0$. 
From \eqref{eq1}, we have $\sum_{i=3}^tn_iy_i=0$, implying that $y_i=0$ for any $3\le i\le t$. 
So $\mathbf{y}=\mathbf{0}$, a contradiction.

For  $3\le i\le t$, as $\lambda_1y_i>y_1$, we have $y_i>0$.

If $t=3$ and $n_2=n_3=1$, then from (iii), $y_{u_1}=0$, and as $y_1>0$, we have $y_2, y_{u_2}>0$,  
so $\mathbf{y}$ is non-negative with the unique zero entry at $u_1$. 
Otherwise, from (iii), $y_{u_1}, y_2, y_{u_2}>0$, so $\mathbf{y}$ is positive. 
\end{proof}

Let $\boldsymbol\Gamma(n_1,\dots, n_t)$ be the class of unbalanced signed $t$-partite graphs  with partite sizes $n_1, \dots, n_t$, where $n_1\ge \dots\ge n_t$, and $t\ge 2$.

\section{Proof of Theorems \ref{N1} and \ref{N2}}

\begin{proof}[Proof of Theorem~\ref{N1}]
Let $\Gamma=(G,\sigma)$ be an unbalanced signed graph in $\boldsymbol\Gamma(n_1,\dots, n_t)$ that maximizes the index. 
Let $|V_i|=n_i$ for $i=1, \dots, t$, and $n_1+\dots+n_t=n$. 

As $\Gamma$ is unbalanced, we have $\Gamma\simeq \Gamma_{1,1,1}$ if $n=3$ and $t=3$, and $\Gamma\simeq \Gamma_{2,2}$ if $n=4$ and $t=2$. 
Suppose in the following that $n\ge 4$ and $t\ge 2$ with $(n,t)\ne (4,2)$. 

By Lemma \ref{L+},  $\Gamma$ is switching equivalent to
a signed graph $\Gamma'=(G,\sigma')$  such that $\lambda_1(\Gamma')$ has a non-negative unit eigenvector $\mathbf{x}$. 
By Lemma \ref{SE}, $\Gamma'\in \boldsymbol\Gamma(n_1,\dots, n_t)$ and $\lambda_1(\Gamma')=\lambda_1(\Gamma)$. 

\begin{Claim}\label{onene}
$\Gamma'$ contains exactly one negative edge. 
\end{Claim}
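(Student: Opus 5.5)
The plan is to argue by contradiction with a Rayleigh quotient comparison, using Lemma~\ref{nne} to get the needed strictness. Since $\Gamma'$ is unbalanced, it has at least one negative edge. Suppose it has at least two negative edges, and fix one of them, $e=uv$, with $u\in V_i$, $v\in V_j$, and labels chosen so that $|V_i|\ge |V_j|$. Let $\Gamma''$ be the signed graph on $K_{n_1,\dots,n_t}$ in which $e$ is the only negative edge. Thus $\Gamma''$ is obtained from $\Gamma'$ by changing every other negative edge to positive and adding every missing edge between distinct partite sets as a positive edge.

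First I check that $\Gamma''\in\boldsymbol\Gamma(n_1,\dots,n_t)$, i.e.\ that it is unbalanced. If $t\ge 3$, pick $w$ in a third partite set; then $uvw$ is a triangle containing exactly one negative edge. If $t=2$, then $n_2\ge 2$, because an unbalanced bipartite graph needs a cycle and hence at least two vertices on each side. So there are $u'\in V_i\setminus\{u\}$ and $v'\in V_j\setminus\{v\}$, and $uvu'v'$ is a negative $4$-cycle. Hence $\Gamma''$ is unbalanced with the same partite sizes.

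Next, let $\mathbf{x}$ be the non-negative unit eigenvector of $\lambda_1(\Gamma')$. Then
\[
\lambda_1(\Gamma'')\ge \mathbf{x}^\top A(\Gamma'')\mathbf{x}
=\mathbf{x}^\top A(\Gamma')\mathbf{x}+\sum_{ab\ \text{flipped}}4x_ax_b+\sum_{ab\ \text{added}}2x_ax_b\ge \lambda_1(\Gamma').
\]
By the maximality of $\lambda_1(\Gamma)=\lambda_1(\Gamma')$, equality holds throughout. The first equality in Rayleigh's principle then makes $\mathbf{x}$ an eigenvector of $A(\Gamma'')$ for $\lambda_1(\Gamma'')=\lambda_1(\Gamma')$. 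It follows that $(A(\Gamma'')-A(\Gamma'))\mathbf{x}=\mathbf{0}$.

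The key step is to extract a contradiction from this, and this is where Lemma~\ref{nne} enters; it applies because $n\ge 4$ and $(n,t)\ne(4,2)$. The lemma says that the non-negative eigenvector $\mathbf{x}$ of $\lambda_1(\Gamma'')$ has at most one zero entry. Now take a second negative edge $ab\ne e$ of $\Gamma'$. The matrix $A(\Gamma'')-A(\Gamma')$ is entrywise non-negative, and its $(a,b)$ entry equals $2$. So the $a$-th row of $(A(\Gamma'')-A(\Gamma'))\mathbf{x}=\mathbf{0}$ is a sum of non-negative terms, which forces $x_b=0$. The $b$-th row likewise forces $x_a=0$. Thus $\mathbf{x}$ has two zero entries, contradicting Lemma~\ref{nne}. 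Hence $\Gamma'$ has exactly one negative edge.

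The main delicate point is that strictness cannot come from positivity of $\mathbf{x}$, since $\mathbf{x}$ may genuinely have a zero entry in the exceptional cases of Lemma~\ref{nne}. The row-by-row argument handles this: it needs only that $\mathbf{x}$ has at most one zero, and it uses both endpoints of the flipped edge.
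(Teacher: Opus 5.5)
Your proof is correct. It shares the paper's setup: the comparison graph with a single negative edge, the Rayleigh equality forced by maximality, and Lemma~\ref{nne} applied to that graph. The two arguments differ in how they finish.

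\textbf{The paper's endgame.} The paper extracts only $x_ux_v=0$ for each flipped edge $uv$, from the quadratic form. It then needs the finer content of Lemma~\ref{nne}: one of the two exceptional configurations must occur, with the unique zero at $u_0$, the endpoint of the retained negative edge. This forces the set of flipped edges to be a single edge $u_0w$. The paper then rules this out structurally, since switching $\{u_0\}$ would make $\Gamma'$ all-positive and hence balanced.

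\textbf{Your endgame.} You use that $\mathbf{x}$ is an eigenvector of both matrices for the same eigenvalue, so $(A(\Gamma'')-A(\Gamma'))\mathbf{x}=\mathbf{0}$. Reading off rows $a$ and $b$ of this entrywise non-negative system gives two distinct zero entries. This needs only the coarse consequence ``at most one zero entry'' of Lemma~\ref{nne}, and it avoids identifying the zero vertex, the case analysis, and the switching argument. It is the same row-wise device the paper itself uses in Claim~\ref{complete}.

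Your explicit check that the comparison graph is unbalanced (via a triangle, or a $4$-cycle when $t=2$) fills in a step the paper only asserts.
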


\begin{proof}
Suppose by contradiction  that $\Gamma'$ contains at least two negative edges. 
Construct $\Gamma^*$ from $\Gamma'$ by fixing one negative edge, say $u_0v_0$, and reversing the signs of all other negative edges in $\Gamma'$, and adding all possible edges between different partite sets as positive. So $\Gamma^*\in \boldsymbol\Gamma (n_1,\dots, n_t)$. Let $E_1$ be the set of negative edges of $\Gamma'$ except $u_0v_0$. 
Note that $\Gamma^*$ has a unique negative edge and $uv$ is positive in $\Gamma^*$ for any $uv\in E_1$. 
By Rayleigh's principle, we have
\[
0\ge \lambda_1(\Gamma^*)-\lambda_1(\Gamma')\ge \mathbf{x}^\top\left(A(\Gamma^*)-A(\Gamma')\right)\mathbf{x}\ge 4\sum_{uv\in E_1}x_{u}x_{v}\ge 0, 
\]
so $\lambda_1(\Gamma^*)=\lambda_1(\Gamma')$, implying that $\mathbf{x}$ is also an eigenvector associated with $\lambda_1(\Gamma^*)$ with  $x_{u}=0$ or $x_{v}=0$ for any $uv\in E_1$. Anyway, $\mathbf{x}$ is not positive.
Assume that $u_0\in V_i$ and $v_0\in V_j$ with $1\le i<j\le t$. 
Applying Lemma \ref{nne} to $\Gamma^*$, we find that  $t=2$ and $n_2=2$, 
or $t=3$, $n_1\ge 2$ and $n_2=n_3=1$, and in either case, $u_0$ is the unique vertex at which the entry of $\mathbf{x}$ is $0$. Then $E_1$ consists of a single edge $u_0w$ for some $w$. But  switching $\{u_0\}$ in $\Gamma'$ creates  $K_{n_1,2}^+$ if $t=2$ and $K_{n_1,1,1}^+$ if $t=3$, which is balanced in either case, a contradiction. 
\end{proof}

 By Claim \ref{onene}, $\Gamma'$ contains exactly one negative edge, say  $u_0v_0$ with $u_0\in V_i$ and $v_0\in V_j$, where $i<j$.

As $G$ is a $k$-partite graph with partite sizes $n_1,\dots, n_t$, $G$ is a subgraph of $K_{n_1,\dots, n_t}$. 

\begin{Claim}\label{complete}
$G= K_{n_1, \dots, n_t}$.
\end{Claim}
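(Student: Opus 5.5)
We argue by contradiction, using the same Rayleigh-quotient comparison as in Claim \ref{onene}. Suppose $G\neq K_{n_1,\dots,n_t}$, and let $\Gamma^{*}$ be obtained from $\Gamma'$ by adding every missing edge between different partite sets as a positive edge. By Lemma \ref{SE}, every cycle of $\Gamma'$ keeps its sign in $\Gamma^*$. Hence $\Gamma^*$ still contains the negative cycle of $\Gamma'$ and is unbalanced, so $\Gamma^*\in\boldsymbol\Gamma(n_1,\dots,n_t)$. Moreover $\Gamma^*$ has $u_0v_0$ as its unique negative edge, so by relabelling (using $|V_i|\ge |V_j|$ or swapping roles) $\Gamma^*\cong\dot\Gamma_{n_1',\dots}$ up to ordering of parts, and Lemma \ref{nne} applies to it, because $n\ge 4$ and $(n,t)\ne(4,2)$.

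Next apply Rayleigh's principle with the non-negative unit eigenvector $\mathbf{x}$ of $\lambda_1(\Gamma')$. Let $E_2=E(K_{n_1,\dots,n_t})\setminus E(G)$ be the set of added edges. Then
\[
0\ge \lambda_1(\Gamma^*)-\lambda_1(\Gamma')\ge \mathbf{x}^\top\left(A(\Gamma^*)-A(\Gamma')\right)\mathbf{x}=2\sum_{uv\in E_2}x_ux_v\ge 0 .
\]
The first inequality is the maximality of $\lambda_1(\Gamma)=\lambda_1(\Gamma')$ within the class. Therefore equality holds throughout. So $\mathbf{x}$ is an eigenvector of $\Gamma^*$ for $\lambda_1(\Gamma^*)$, and $x_ux_v=0$ for every $uv\in E_2$. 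Since $E_2\neq\emptyset$, $\mathbf{x}$ is non-negative but not positive.

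Lemma \ref{nne} then forces one of two exceptional configurations: $t=2$ and the smaller part has size $2$, or $t=3$ with part sizes $n_1\ge 2$ and $n_2=n_3=1$. In both configurations the unique zero entry of $\mathbf{x}$ lies at the endpoint $u_0$ of the negative edge in the larger part. Every edge of $E_2$ must meet a zero entry, so every edge of $E_2$ is incident with $u_0$.

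The main obstacle is to rule out these two exceptional configurations. In each of them, every missing edge lies at $u_0$, so $u_0$ has degree at most $1$ in $G$: its only possible neighbours are the vertices of the other parts, namely $2$ vertices if $t=2$ and $2$ vertices if $t=3$, and at least one edge at $u_0$ is missing. The vertex $u_0$ is still incident with $u_0v_0$, so $u_0v_0$ is its only edge. Then $u_0v_0$ lies on no cycle of $G$. Every cycle of $\Gamma'$ avoids $u_0$ and hence consists only of positive edges, so $\Gamma'$ is balanced, a contradiction. (Equivalently, switching $\{u_0\}$ produces an all-positive signed graph.) Hence $E_2=\emptyset$ and $G=K_{n_1,\dots,n_t}$.
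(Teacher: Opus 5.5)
Your proof is correct. Its first half is the paper's argument: add the missing edges $E_2$ as positive edges, then use maximality and Rayleigh's principle to get $\lambda_1(\Gamma^*)=\lambda_1(\Gamma')$ with $\mathbf{x}$ an eigenvector of $\Gamma^*$, and invoke Lemma \ref{nne}. The two proofs differ in how the contradiction is extracted.

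You use only the consequence $x_ux_v=0$ for $uv\in E_2$. That still allows the two exceptional configurations, where the single zero entry sits at $u_0$. So you need a second, structural step: there $u_0$ has only two potential neighbours, becomes a pendant vertex, and $\Gamma'$ would be balanced. This is the same device the paper uses to close Claim \ref{onene}.

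The paper gets more out of the equality case. Since $\mathbf{x}$ is an eigenvector of both $\Gamma'$ and $\Gamma''$ for the same eigenvalue, subtracting the two eigen-equations at an endpoint $v'$ of a missing edge $u'v'$ gives $\sum_{uv'\in E_1}x_u=0$, hence $x_{u'}=0$. Choosing $u'\notin\{u_0,v_0\}$ is possible because $u_0v_0\in E(G)$. This gives a zero entry away from $u_0$ and contradicts Lemma \ref{nne} at once, with no case analysis of the exceptional configurations. Applied at both endpoints, the same computation shows every missing edge has zero entries at both ends, which is strictly stronger than your $x_ux_v=0$. Your version is a little longer, but it reuses the template of Claim \ref{onene} unchanged.

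One small point: Lemma \ref{SE} is not the right citation for $\Gamma^*$ being unbalanced, since it concerns switching equivalence on a fixed underlying graph. A negative cycle of $\Gamma'$ stays negative in $\Gamma^*$ simply because $\Gamma'$ is a signed subgraph of $\Gamma^*$.
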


\begin{proof}
Suppose to the contrary that $G$ is a proper spanning subgraph of $K_{n_1, \dots, n_t}$. 
Let  $E_1:=E(K_{n_1, \dots, n_t})\setminus E(G)$. 
Let $\Gamma''=(G_1,\sigma'')$ be a signed graph obtained from $\Gamma'$ by adding all  edges in $E_1$ as positive edges, so $G_1=K_{n_1, \dots, n_t}$ and  $\Gamma''\in \boldsymbol\Gamma (n_1,\dots, n_t)$. 
By Rayleigh's principle, 
\[
0\ge \lambda_1(\Gamma'')-\lambda_1(\Gamma')\ge \mathbf{x}^\top \left(A(\Gamma'')-A(\Gamma')\right)\mathbf{x}=2\sum_{uv\in E_1}x_ux_v\ge0, 
\]
so $\lambda_1(\Gamma'')=\lambda_1(\Gamma')$, and $\mathbf{x}$ is also an eigenvector associated with $\lambda_1(\Gamma'')$. 
Let $u'v'\in E_1$. 
Then  $\{u', v'\}\ne \{u_0, v_0\}$. Assume that $u'\in \{u', v'\}\setminus \{u_0, v_0\}$. 
As 
\[
0=\left(\lambda_1(\Gamma'')-\lambda_1(\Gamma')\right)x_{v'}=\sum_{uv'\in E_1}x_u\ge x_{u'}, 
\]
so $x_{u'}=0$, contradicting  Lemma \ref{nne} as $u'\ne u_0$. 
\end{proof}

By Claim \ref{complete}, $G=K_{n_1, \dots, n_t}$. 

If $t=2$, then $\Gamma\simeq \Gamma_{n_1, n_2}$, as desired. And if $t\ge 3$ and $n=3,4,5$, then from Fig.~\ref{f1}, $\Gamma'\simeq \Gamma_{n_1, \dots, n_t}$, as desired. 

Suppose in the following that $t\ge 3$ and $n\ge 6$. 
By Lemmas \ref{interlace} and \ref{gnt}, $\lambda_1(\Gamma')\ge\lambda_1(\dot\Gamma_{n_1, \dots, n_t})\ge n_j$. Recall that $n_1\ge \dots\ge n_t$ and $\Gamma'$ contains exactly one negative edge connecting  $u_0\in V_i$ and $v_0\in V_j$.

\begin{Claim}\label{large}
$n_i=n_1$ and $n_j=n_2$.  
\end{Claim}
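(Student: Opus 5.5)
I would argue by contradiction with a Rayleigh-quotient perturbation that moves one endpoint of the unique negative edge. The setting is the one reached just before the claim: $\Gamma'=(K_{n_1,\dots,n_t},\sigma')$ has exactly one negative edge $u_0v_0$ with $u_0\in V_i$, $v_0\in V_j$, $i<j$, so $n_i\ge n_j$. It has a non-negative unit eigenvector $\mathbf{x}$ for $\lambda:=\lambda_1(\Gamma')$, and $\lambda\ge n_j$. By Lemma~\ref{nne} (with $n\ge 6$, $t\ge 3$), $\mathbf{x}$ is positive except in the exceptional configuration $t=3$, $n_i\ge 2$, $n_j=n_\ell=1$, where only $x_{u_0}=0$. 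In that configuration the claim is immediate: $n_i$ is the unique largest size and the other two sizes equal $1=n_2$. So I assume $\mathbf{x}>0$.

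First I would show $n_j=n_2$. If not, then $n_j<n_2$, and some $k\notin\{i,j\}$ has $n_k\ge n_2>n_j$ (one of $V_1,V_2$ is not $V_i$). Pick $w\in V_k$ and let $\Gamma'''$ be obtained from $\Gamma'$ by making $u_0v_0$ positive and $u_0w$ negative. It is still unbalanced (a triangle $u_0wz$ is negative), complete, and in $\boldsymbol\Gamma(n_1,\dots,n_t)$. By Rayleigh's principle,
\[
\lambda_1(\Gamma''')-\lambda\ge \mathbf{x}^\top(A(\Gamma''')-A(\Gamma'))\mathbf{x}=4x_{u_0}(x_{v_0}-x_w).
\]
So the key inequality to prove is $x_{v_0}>x_w$. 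Given it, we get $\lambda_1(\Gamma''')>\lambda$, contradicting maximality. Even equality would make $\mathbf{x}$ an eigenvector of $\Gamma'''$, and comparing the eigen-equations at $v_0$ would force $x_{u_0}=0$, again a contradiction.

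To compare entries I write $S=\sum_v x_v$ and $s_m=\sum_{v\in V_m}x_v$. The eigen-equations give, for every vertex $z\in V_m\setminus\{u_0,v_0\}$, $\lambda x_z=S-s_m$. They also give $\lambda x_{v_0}=S-s_j-2x_{u_0}$ and $\lambda x_{u_0}=S-s_i-2x_{v_0}$. For the all-positive part $V_k$ this yields $s_k=n_kS/(\lambda+n_k)$ and $x_w=S/(\lambda+n_k)$. In $V_j$, writing $y$ for the common entry of $V_j\setminus\{v_0\}$, we have $x_{v_0}=y-2x_{u_0}/\lambda$ and $s_j=n_jy-2x_{u_0}/\lambda$. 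Hence
\[
y=\frac{S+2x_{u_0}/\lambda}{\lambda+n_j},\qquad x_{v_0}=\frac{S}{\lambda+n_j}-\frac{2x_{u_0}(\lambda+n_j-1)}{\lambda(\lambda+n_j)}.
\]
So $x_{v_0}>x_w$ is equivalent to
\[
S\left(\frac{1}{\lambda+n_j}-\frac{1}{\lambda+n_k}\right)>\frac{2x_{u_0}(\lambda+n_j-1)}{\lambda(\lambda+n_j)},
\]
i.e. $S\lambda(n_k-n_j)>2x_{u_0}(\lambda+n_j-1)(\lambda+n_k)$. Since $n_k-n_j\ge1$, it suffices to bound $x_{u_0}$ by a small multiple of $S$. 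Doing the analogous computation in $V_i$, we have $x_{u_0}<S/(\lambda+n_i)\le S/(\lambda+n_j)$, so it suffices to show
\[
\lambda(\lambda+n_j)>2(\lambda+n_j-1)(\lambda+n_k).
\]
This crude bound will not hold in general, so the main obstacle is sharpening the estimate of $x_{u_0}$. I would solve the $4\times4$ system for $(x_{u_0},x_{v_0})$ together with the common entries of $V_i$ and $V_j$ exactly in terms of $S$ and $\lambda$, as in \eqref{eq1}--\eqref{eq4}. I would then use $\lambda>2$ and $\lambda\ge n_j$ (Lemma~\ref{gnt}) to verify the required inequality. If that becomes unwieldy, my fallback is to avoid eigenvectors altogether: compare $\phi$ from Lemma~\ref{gnt} for the parameter orders $(n_i,n_j,\dots)$ and $(n_i,n_k,\dots)$ at $x=\lambda$, showing that the larger pair gives a negative value and hence a larger largest root.

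Finally, to show $n_i=n_1$, I would suppose $n_i<n_1$, so some $k\ne i,j$ has $n_k>n_i$, and run the symmetric argument, moving the endpoint $u_0$ to $w\in V_k$. Since $n_j=n_2$ is already known, $n_k>n_i$ forces this. Once both endpoints lie in parts of sizes $n_1$ and $n_2$, $\Gamma'$ is isomorphic to $\dot\Gamma_{n_1,\dots,n_t}$. This gives $\Gamma\simeq\Gamma_{n_1,\dots,n_t}$ and the value of $\lambda_1$ from Lemma~\ref{gnt}.
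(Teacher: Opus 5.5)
There is a genuine gap. Your argument rests on the first-order inequality $x_{v_0}>x_w$, which is what would make $\mathbf{x}^\top(A(\Gamma''')-A(\Gamma'))\mathbf{x}=4x_{u_0}(x_{v_0}-x_w)>0$. That inequality is false in general. No sharper estimate of $x_{u_0}$ can repair it, because it already fails for the exact eigenvector.

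Take $t=3$ and $K_{3,3,2}$ with the negative edge $u_0v_0$ joining a part $V_i$ of size $3$ to the part $V_j$ of size $2$. Then $n=8$, $n_i=n_1$, $n_j=2<n_2=3$, and $V_k$ is the other part of size $3$. Here $\lambda\approx 4.864$ is the largest root of $x^5-21x^3-24x^2+44x+48$. Normalizing $S=1$, the positive eigenvector has $x_{u_0}\approx 0.0857$, $x_{v_0}\approx 0.1156$ and $x_w=1/(\lambda+3)\approx 0.1272$.

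So $x_{v_0}<x_w$. In your reduced form, $S\lambda(n_k-n_j)\approx 4.86$ while $2x_{u_0}(\lambda+n_j-1)(\lambda+n_k)\approx 7.90$. Your Rayleigh bound therefore only gives a lower bound below $\lambda$, even though actually $\lambda_1(\Gamma''')\approx 4.916>\lambda$ (largest root of $x^5-21x^3-28x^2+48x+64$). Moreover, $x_{u_0}x_{v_0}\approx 0.0099<0.0109\approx x_{u_0}x_w\le x_px_q$ for all $p\in V_i$, $q\in V_k$. So no relocation of the negative edge between $V_i$ and $V_k$ can be certified with the old eigenvector alone. Your second step ($n_i=n_1$) hits the same obstruction.

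The missing idea, which the paper uses, is to pair $\mathbf{x}$ with a non-negative eigenvector $\mathbf{y}$ of the modified graph (negative edge $u_0w$, $\lambda'=\lambda_1(\Gamma''')$). By Lemma~\ref{nne} this $\mathbf{y}$ is positive except when $t=3$, $n_i=1$, which the paper handles separately. One then uses the exact identity
\[
(\lambda'-\lambda)\,\mathbf{y}^\top\mathbf{x}=\mathbf{y}^\top\bigl(A(\Gamma''')-A(\Gamma')\bigr)\mathbf{x}=2\bigl((x_{v_0}-x_w)\,y_{u_0}+(y_{v_0}-y_w)\,x_{u_0}\bigr)
\]
rather than a one-sided Rayleigh inequality. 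The first summand may be negative. In $\Gamma'''$, however, it is $w$ whose entry is depressed, so the second summand compensates.

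Concretely, write $x_{v_0},x_w$ as multiples of $x_{u_0}$ and $y_{v_0},y_w$ as multiples of $y_{u_0}$. The terms $\frac{2}{\lambda'+n_j}+\frac{2(n_k-1)}{\lambda'(\lambda'+n_j)}$ coming from the second summand then dominate the negative terms $\frac{2}{\lambda+n_k}+\frac{2(n_j-1)}{\lambda(\lambda+n_k)}$ of the first. This works precisely because $\lambda'\le\lambda$ (maximality of $\Gamma$) and $n_k>n_j$. In the example above, with $\sum_v y_v=1$, the two summands are about $-0.0011$ and $+0.0044$.

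Your fallback, showing $\phi_{n_i,n_k,\dots}(\lambda)<0$, is a legitimate alternative in principle via Lemma~\ref{QM}, but it is only announced. Since $n_k$ enters and $n_j$ leaves the sum over the remaining parts, that comparison is the entire content of the claim. As written, the proposal does not establish it.
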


\begin{proof}
Suppose to the contrary that $n_i\ne n_1$ or $n_j\ne n_2$.
Let 
\[
k=\begin{cases}
1 &  \mbox{if $n_i\ne n_1$, or $i\ge 2$ and $n_i=n_1=n_2$},\\
2 &  \mbox{otherwise}.
\end{cases}
\]
Then $n_k> n_i$ or $n_k> n_j$, so $n_k\ge 2$. By the choice of $k$, $k\ne i,j$. 
By Lemma \ref{nne}, $\mathbf{x}$ is positive. 
For any $u, v\in V_i\setminus \{u_0\}$, we have  $\lambda_1(\Gamma')x_u=\lambda_1(\Gamma')x_v$, so $x_u=x_v$, which we denote by $x_i$. 
Similarly, the entry of $\mathbf{x}$ at each vertex in $V_j\setminus\{v_0\}$ is the same and is denoted by $x_j$,  and  the entry of $\mathbf{x}$ at each vertex in $V_\ell$ is the same and is denoted by $x_\ell$
for any $1\le \ell \le t$ and $\ell\ne i, j$. 

Let $u_1\in V_k$.  
Let $\dot{\Gamma}$ be a signed graph obtained from $\Gamma'$ by reversing the sign of $u_0v_0$ and $u_0u_1$. Then $\dot{\Gamma}\in \boldsymbol\Gamma(n_1,\dots, n_t)$. By Lemma \ref{nne}, let $\mathbf{y}$ be a non-negative unit eigenvector associated with $\lambda_1(\dot{\Gamma})$. Denote by $y_i$ the entry of $\mathbf{y}$ at each vertex in $V_i\setminus\{u_0\}$, $y_k$ the entry of $\mathbf{y}$ at each vertex in $V_k\setminus\{u_1\}$ and $y_\ell$ the entry of $\mathbf{y}$ at each vertex in $V_\ell$ for any $1\le \ell \le t$ and $\ell\ne i, k$. 

If $t=3$ and $n_i=1$, then $n_j=1$, so by Lemma \ref{nne}, $y_{u_1}=0$. Since $\lambda_1(\dot{\Gamma})(y_j-y_{u_0})=y_{u_0}+2y_{u_1}-y_j=y_{u_0}-y_j$, we have $(\lambda_1(\dot{\Gamma})+1)(y_j-y_{u_0})=0$, i.e., $y_j=y_{u_0}$. 
As $\lambda_1(\Gamma')(x_{u_0}-x_{v_0})=-x_{v_0}+x_{u_0}$, we have $x_{u_0}=x_{v_0}$, and $\lambda_1(\Gamma')x_k=x_{u_0}+x_{v_0}=2x_{u_0}$. 
So $x_k=\frac{2}{\lambda_1(\Gamma')}x_{u_0}<x_{u_0}$. 
By Rayleigh's principle, 
\begin{align*}
0& \ge \mathbf{y}^\top\mathbf{x}(\lambda_1(\dot{\Gamma})-\lambda_1(\Gamma'))= \mathbf{y}^\top\left(A(\dot{\Gamma})-A(\Gamma')\right)\mathbf{x}=2\left(y_{u_0}x_{v_0}+x_{u_0}y_j-y_{u_1}x_{u_0}-x_ky_{u_0}\right)\\
& =2\left(y_{u_0}x_{v_0}+x_{u_0}y_{u_0}-x_ky_{u_0}\right)>0, 
\end{align*}
a contradiction.

Suppose that $t\ge 4$ or $n_i\ge 2$. Then by Lemma \ref{nne}, $\mathbf{y}$ is positive. 
For convenience, denote by $\lambda_1=\lambda_1(\Gamma)=\lambda_1(\Gamma')$ and $\lambda_1'=\lambda_1(\dot{\Gamma})$.  
Note that if $n_i\ge 2$, then $\lambda_1(x_i-x_{u_0})=2x_{v_0}$, so  
\[
x_i=x_{u_0}+\frac{2}{\lambda_1}x_{v_0}
\]
and  similarly, if $n_j\ge 2$, then  
\[
x_j=x_{v_0}+\frac{2}{\lambda_1}x_{u_0}. 
\]
As
\[
\lambda_1x_{v_0}=-x_{u_0}+(n_i-1)x_i+\sum_{1\le \ell\le t\atop\ell\ne i, j}n_\ell x_\ell
\]
and
\[
\lambda_1x_{u_0} = -x_{v_0}+(n_j-1)x_j+\sum_{1\le \ell\le t\atop\ell\ne i, j}n_\ell x_\ell,
\]
we have
\begin{align*}
\lambda_1x_{u_0}
& = (\lambda_1-1)x_{v_0}+(n_j-1)\left(x_{v_0}+\frac{2}{\lambda_1}x_{u_0}\right)+x_{u_0}-(n_i-1)\left(x_{u_0}+\frac{2}{\lambda_1}x_{v_0}\right)\\
& = \left(\lambda_1+n_j-2-\frac{2(n_i-1)}{\lambda_1}\right)x_{v_0}-\left(n_i-2-\frac{2(n_j-1)}{\lambda_1}\right)x_{u_0}, 
\end{align*}
i.e.,
\[
\left(\lambda_1+n_i-2-\frac{2(n_j-1)}{\lambda_1}\right)x_{u_0}=\left(\lambda_1+n_j-2-\frac{2(n_i-1)}{\lambda_1}\right)x_{v_0}, 
\]
As $\lambda_1\ge n_j$, we have $\lambda_1^2+(n_i-2)\lambda_1-2(n_j-1)>0$, so
\[
x_{v_0}=\frac{\lambda_1^2+(n_i-2)\lambda_1-2(n_j-1)}{\lambda_1^2+(n_j-2)\lambda_1-2(n_i-1)}x_{u_0}. 
\]
Similarly, as
\[
\lambda_1x_k=x_{v_0}+(n_j-1)x_j+x_{u_0}+(n_i-1)x_i+\sum_{1\le \ell\le t\atop\ell\ne i, j}n_\ell x_\ell-n_kx_k,
\]
we also have
\begin{align*}
\lambda_1x_k
& =x_{v_0}+(n_j-1)\left(x_{v_0}+\frac{2}{\lambda_1}x_{u_0}\right)+x_{u_0}+\lambda_1x_{v_0}+x_{u_0}-n_kx_k\\
& =(\lambda_1+n_j)x_{v_0}+\left(2+\frac{2(n_j-1)}{\lambda_1}\right)x_{u_0}-n_kx_k\\
& =(\lambda_1+n_j)\left(\frac{\lambda_1^2+(n_i-2)\lambda_1-2(n_j-1)}{\lambda_1^2+(n_j-2)\lambda_1-2(n_i-1)}+2+\frac{2(n_j-1)}{\lambda_1}\right)x_{u_0}-n_kx_k, 
\end{align*}
so
\[
x_k=\left(\frac{\lambda_1+n_j}{\lambda_1+n_k}\cdot\frac{\lambda_1^2+(n_i-2)\lambda_1-2(n_j-1)}{\lambda_1^2+(n_j-2)\lambda_1-2(n_i-1)}+\frac{2}{\lambda_1+n_k}+\frac{2(n_j-1)}{\lambda_1(\lambda_1+n_k)}\right)x_{u_0}. 
\]
Thus 
\[
x_{v_0}-x_k=\left(\left(1-\frac{\lambda_1+n_j}{\lambda_1+n_k}\right)\cdot\frac{\lambda_1^2+(n_i-2)\lambda_1-2(n_j-1)}{\lambda_1^2+(n_j-2)\lambda_1-2(n_i-1)}-\frac{2}{\lambda_1+n_k}-\frac{2(n_j-1)}{\lambda_1(\lambda_1+n_k)}\right)x_{u_0}. 
\]
By Lemma \ref{gnt}, $\lambda_1'\ge \min\{n_i, n_k\}$. 
Similarly, we have 
\[
y_{u_1}=\frac{\lambda_1'^2+(n_i-2)\lambda_1'-2(n_k-1)}{\lambda_1'^2+(n_k-2)\lambda_1'-2(n_i-1)}y_{u_0},  
\]
and 
\[
y_j=\left(\frac{\lambda_1'+n_k}{\lambda_1'+n_j}\cdot\frac{\lambda_1'^2+(n_i-2)\lambda_1'-2(n_k-1)}{\lambda_1'^2+(n_k-2)\lambda_1'-2(n_i-1)}+\frac{2}{\lambda_1'+n_j}+\frac{2(n_k-1)}{\lambda_1'(\lambda_1'+n_j)}\right)y_{u_0}. 
\]
So
\[
y_j-y_{u_1}=\left(\left(\frac{\lambda_1'+n_k}{\lambda_1'+n_j}-1\right)\cdot\frac{\lambda_1'^2+(n_i-2)\lambda_1'-2(n_k-1)}{\lambda_1'^2+(n_k-2)\lambda_1'-2(n_i-1)}+\frac{2}{\lambda_1'+n_j}+\frac{2(n_k-1)}{\lambda_1'(\lambda_1'+n_j)}\right)y_{u_0}.
\]
Note that
\[
\frac{2}{\lambda_1'+n_j}-\frac{2}{\lambda_1+n_k}\ge \frac{2}{\lambda_1+n_j}-\frac{2}{\lambda_1+n_k}>0
\]
and 
\[
\frac{2(n_k-1)}{\lambda_1'(\lambda_1'+n_j)}-\frac{2(n_j-1)}{\lambda_1(\lambda_1+n_k)}\ge \frac{2(n_k-1)}{\lambda_1(\lambda_1+n_j)}-\frac{2(n_j-1)}{\lambda_1(\lambda_1+n_k)}>0.
\]
By Rayleigh's principle, 
\begin{align*}
0& \ge \mathbf{y}^\top\mathbf{x}(\lambda_1(\dot{\Gamma})-\lambda_1(\Gamma'))\\
& = \mathbf{y}^\top\left(A(\dot{\Gamma})-A(\Gamma')\right)\mathbf{x}=2\left(y_{u_0}x_{v_0}+x_{u_0}y_j-y_{u_1}x_{u_0}-x_ky_{u_0}\right)\\ 
& =2\left((x_{v_0}-x_k)y_{u_0}+(y_j-y_{u_1})x_{u_0}\right)\\
& =2x_{u_0}y_{u_0}\left(\frac{n_k-n_j}{\lambda_1+n_k}\cdot\frac{\lambda_1^2+(n_i-2)\lambda_1-2(n_j-1)}{\lambda_1^2+(n_j-2)\lambda_1-2(n_i-1)}-\frac{2}{\lambda_1+n_k}-\frac{2(n_j-1)}{\lambda_1(\lambda_1+n_k)}\right.\\
& \quad\left.+\frac{n_k-n_j}{\lambda_1'+n_j}\cdot\frac{\lambda_1'^2+(n_i-2)\lambda_1'-2(n_k-1)}{\lambda_1'^2+(n_k-2)\lambda_1'-2(n_i-1)}+\frac{2}{\lambda_1'+n_j}+\frac{2(n_k-1)}{\lambda_1'(\lambda_1'+n_j)}\right)\\
& >0, 
\end{align*}
a contradiction. 
\end{proof}

By Claims \ref{onene}, \ref{complete} and \ref{large}, we have $\Gamma'\simeq \Gamma_{n_1, \dots, n_t}$. 
So  $\lambda_1(\Gamma)=\lambda_1(\Gamma')=\lambda_1(\Gamma_{n_1, \dots, n_t})$, and $\Gamma\simeq \Gamma_{n_1, \dots, n_t}$.

By Lemma \ref{gnt}, 
$\lambda_1(\dot\Gamma_{n_1, \dots, n_t})$ is the largest root of $\phi_{n_1, \dots, n_t}(x)=0$. 
\end{proof}


\begin{proof}[Proof of Theorem~\ref{N2}]
Let $\Gamma=(G,\sigma)$ be an unbalanced signed $t$-partite graph of order $n$  maximizes the index. 
Let $V_1, \dots, V_t$ be the partite sets of $G$ with  $|V_i|=n_i$ for $i=1,\dots, t$. Assume that $n_1\ge \dots\ge n_t$. 
By Theorem $\ref{N1}$, $\Gamma\simeq \Gamma_{n_1, \dots, n_t}$. 
Let $u_0v_0$ be the unique negative edge of $\Gamma_{n_1, \dots, n_t}$. 
Then by Theorem $\ref{N1}$ again, we can assume that  $u_0\in V_1$ and $v_0\in V_2$. 

\begin{Claim}\label{turan}
$n_1-n_t=0,1$.
\end{Claim}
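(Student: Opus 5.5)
We argue by contradiction via a vertex-moving operation and the Rayleigh quotient. Suppose $n_1-n_t\ge 2$. Pick a part $V_p$ with $n_p=n_1$ and a part $V_q$ with $n_q=n_t$, so $n_p\ge n_q+2$. Also pick a vertex $w\in V_p\setminus\{u_0,v_0\}$; it exists because $n_p\ge 3$. Let $\Gamma^\ast$ be obtained from $\Gamma_{n_1,\dots,n_t}$ by moving $w$ from $V_p$ to $V_q$. Concretely, we delete the edges from $w$ to $V_q$ and add positive edges from $w$ to $V_p\setminus\{w\}$. Then $\Gamma^\ast$ is again a signed complete $t$-partite graph of order $n$ whose only negative edge is $u_0v_0$. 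It is unbalanced: for $t\ge 3$ the edge $u_0v_0$ lies in a negative triangle. For $t=2$ we have $n_2\ge 2$, and this is preserved because $q=2$ gains a vertex and $p=1$ keeps $n_1-1\ge n_2+1$. So $\Gamma^\ast$ is admissible, and maximality gives $\lambda_1(\Gamma^\ast)\le\lambda_1(\Gamma)$.

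By Lemma \ref{nne}, we take a non-negative unit eigenvector $\mathbf{x}$ for $\lambda_1=\lambda_1(\Gamma_{n_1,\dots,n_t})$. It is positive except in the listed small exceptions, and those have $n_1-n_t\le 1$ or $t=2$, $n_2=2$; the latter we treat directly. Rayleigh's principle gives
\[
\lambda_1(\Gamma^\ast)-\lambda_1\ge \mathbf{x}^\top\bigl(A(\Gamma^\ast)-A(\Gamma)\bigr)\mathbf{x}
=2x_w\Bigl(\sum_{z\in V_p\setminus\{w\}}x_z-\sum_{z\in V_q}x_z\Bigr).
\]
It therefore suffices to show the bracket is positive. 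If the bracket were only $\ge 0$, the equality case of Rayleigh would make $\mathbf{x}$ an eigenvector of $\Gamma^\ast$. Comparing the eigen-equations of $\Gamma$ and $\Gamma^\ast$ at a vertex of $V_q$ would then force $x_w=0$, contradicting positivity. So either way we reach the contradiction $\lambda_1(\Gamma^\ast)>\lambda_1$.

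To evaluate the bracket we use the block structure of $\mathbf{x}$, as in the proof of Claim \ref{large}. Let $T$ be the sum of all entries of $\mathbf{x}$. For a part $V_\ell$ with $\ell\ge 3$, every vertex has the eigen-equation $\lambda_1x_\ell=T-n_\ell x_\ell$, so $x_\ell=T/(\lambda_1+n_\ell)$. When both $p,q\ge 3$, the bracket equals
\[
\frac{(n_p-1)T}{\lambda_1+n_p}-\frac{n_qT}{\lambda_1+n_q}.
\]
This is positive because $s\mapsto s/(\lambda_1+s)$ is increasing and $n_p-1>n_q$. The same formula gives $\sum_{V_q}x=n_qT/(\lambda_1+n_q)$ whenever $q\ge 3$.

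The main obstacle is when $p$ or $q$ lies in $\{1,2\}$, i.e.\ the part containing $u_0$ or $v_0$ is involved. Here we use the relations already derived:
\[
x_1=x_{u_0}+\tfrac{2}{\lambda_1}x_{v_0},\qquad x_2=x_{v_0}+\tfrac{2}{\lambda_1}x_{u_0},
\]
together with the expression for $x_{v_0}/x_{u_0}$ and the bound $\lambda_1\ge n_2$ from Lemma \ref{gnt}. Summing over a part gives
\[
\sum_{V_1}x=T-\lambda_1x_1+2x_{v_0}\quad\text{and}\quad \lambda_1x_{u_0}=T-\sum_{V_1}x-2x_{v_0},
\]
and similarly for $V_2$. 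From these I would write $\sum_{V_1\setminus\{w\}}x$ as $\frac{n_1-1}{\lambda_1+n_1}T$ plus a correction of order $x_{v_0}/\lambda_1$. Then I would check that the correction cannot overturn the margin
\[
\frac{n_1-1}{\lambda_1+n_1}-\frac{n_q}{\lambda_1+n_q}\ \ge\ \frac{\lambda_1}{(\lambda_1+n_1)(\lambda_1+n_1-2)},
\]
which holds since $n_q\le n_1-2$; this is where the computation will be tight. If that fails in some boundary case (for instance $n_1=n_2$ with $q=3$), I would instead choose $w$ from $V_2$, or from a part $V_\ell$ with $\ell\ge 3$ whenever $n_\ell\ge n_t+2$, reducing to the clean case above. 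Any leftover small-$n$ configurations would be checked directly from the characteristic polynomial $\phi_{n_1,\dots,n_t}$ in Lemma \ref{gnt}.
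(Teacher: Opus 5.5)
\textbf{Verdict: there is a genuine gap.} The step you defer is essentially the whole claim, and the inequality you plan to prove there is false in some admissible cases.

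Your reduction to showing $\sum_{V_p\setminus\{w\}}x_z>\sum_{V_q}x_z$ for the eigenvector $\mathbf{x}$ of $\Gamma$ alone is sound. The case $p,q\ge 3$ goes through, but the reason is $(n_p-1-n_q)\lambda_1>n_q$, which holds since $\lambda_1\ge n_2\ge n_p>n_q$. Monotonicity of $s/(\lambda_1+s)$ does not apply directly, because the first term is $(n_p-1)/(\lambda_1+n_p)$.

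\textbf{Why the deferred case is forced.} If $n_1>n_2$, the donor part must be $V_1\ni u_0$. For $t=2$ the receiving part must be $V_2\ni v_0$, and there is no other admissible vertex move.

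\textbf{Counterexamples to your bracket inequality.} For $t=2$ the eigen-equations give exactly $\sum_{V_1\setminus\{w\}}x_z-\sum_{V_2}x_z=\lambda_1x_2-(\lambda_1+1)x_1$. When $n_1=n_2+2$ this is positive iff $\lambda_1^2-(n_2+2)\lambda_1+2n_2-2>0$. That fails for $(n_1,n_2)=(4,2),(5,3),(6,4)$; for instance, for $(5,3)$ one has $\lambda_1\approx 3.525<4$ and the bracket is about $-0.1\,x_1$. For $(3,1,1)$ the eigenvector is $(0,1,1,\lambda_1/2,\lambda_1/2)$, and the bracket equals $1-\lambda_1/2<0$ whichever part receives $w$.

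\textbf{Further errors.}
\begin{itemize}
\item Your remark that the non-positive cases of Lemma \ref{nne} all have $n_1-n_t\le 1$ is wrong: $(n_1,1,1)$ with $n_1\ge 3$ has $x_{u_0}=0$.
\item For $t\ge 3$, $p=1$, $q\ge 3$, one has exactly $(\lambda_1+n_q)\cdot(\text{bracket})=\lambda_1x_{u_0}+((n_1-2-n_q)\lambda_1-n_q)x_1$. So when $n_1-n_q=2$ you need the nontrivial bound $\lambda_1x_{u_0}>n_qx_1$, i.e.\ control of $x_{v_0}/x_{u_0}$. Your proposed margin does not supply this.
\item The margin is itself miscomputed: for $n_q=n_1-2$ the difference $\frac{n_1-1}{\lambda_1+n_1}-\frac{n_q}{\lambda_1+n_q}$ equals $\frac{\lambda_1-n_1+2}{(\lambda_1+n_1)(\lambda_1+n_1-2)}$, not a quantity with numerator $\lambda_1$.
\end{itemize}
To make your route work you would have to prove positivity of the bracket outside an explicitly identified finite list, and then check that list by direct computation. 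None of this is in the proposal.

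\textbf{How the paper avoids this.} It uses a second eigenvector. It moves $u'\in V_1\setminus\{u_0\}$ to $V_t$ and takes non-negative eigenvectors $\mathbf{x}$ of $\Gamma$ and $\mathbf{y}$ of the new graph $\Gamma'$, both supplied by Lemma \ref{nne}. It then evaluates $\mathbf{y}^\top(A(\Gamma')-A(\Gamma))\mathbf{x}=(\lambda_1'-\lambda_1)\mathbf{y}^\top\mathbf{x}\le 0$. Using the eigen-equations of both graphs, this collapses to $(\lambda_1'-\lambda_1)x_1y_t+\lambda_1x_ty_t-\lambda_1'x_1y_1$.
\begin{itemize}
\item For $t=2$ it uses the elementary comparisons $x_2>x_1$ and $y_2\ge y_1$.
\item For $t\ge 3$ it uses $x_t\ge\frac{\lambda_1+n_t+1}{\lambda_1+n_t}x_1$ and $y_t\ge\frac{\lambda_1'+n_t}{\lambda_1'+n_t+1}y_1$.
\end{itemize}
Either way the contradiction comes uniformly. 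The equality case forces $x_{u_0}=y_{u_0}=0$, which Lemma \ref{nne} rules out. Because the information about $\Gamma'$ enters through $\mathbf{y}$, the cases where $\mathbf{x}$ alone fails to show the index increases need no separate treatment.
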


\begin{proof}
Suppose to the contrary that $n_1-n_t\ge 2$. 
Let $u'\in V_1\setminus\{u_0\}$. 
Let $\Gamma'$ be the signed graph obtained from $\Gamma$ by deleting the edges between $u'$ and each vertex in $V_t$, and adding all possible edges between $u'$ and each vertex in $V_1\setminus\{u'\}$ as positive ones. 
Then $\Gamma'\in \boldsymbol\Gamma (n_1-1, n_2, \dots, n_t+1)$. 
By Lemma \ref{nne}, there are non-negative eigenvectors  $\mathbf{x}$ and $\mathbf{y}$ associated with $\lambda_1(\Gamma)$ and $\lambda_1(\Gamma')$,  respectively. 
As $\lambda_1(\Gamma)x_u=\lambda_1(\Gamma)x_v$ for any $u, v\in V_\ell\setminus\{u_0, v_0\}$ with $\ell=1, \dots, t$, we have $x_u=x_v$. 
Denote by $x_\ell=x_u$ for any $u\in V_\ell\setminus\{u_0, v_0\}$ with $\ell=1, \dots, t$. 
Similarly, denote by $y_\ell=y_u$ for any $u\in V_\ell\setminus\{u_0, v_0\}$ with $\ell=1, \dots, t$. 

For convenience, denote by $\lambda_1=\lambda_1(\Gamma)$ and $\lambda_1'=\lambda_1(\Gamma')$. 
By Raleigh's principle, 
\begin{align*}
0& \ge\mathbf{y}^\top(\lambda_1'-\lambda_1)\mathbf{x} =\mathbf{y}^\top(A(\Gamma')-A(\Gamma))\mathbf{x}\\
& =x_{u_0}y_t+y_{u_0}x_1+(n_1-2)(x_1y_t+y_1x_1)-n_t(x_1y_t+y_tx_t)\\
& =(x_{u_0}-n_tx_t)y_t+(y_{u_0}+(n_1-2)y_1)x_1+(n_1-n_t-2)x_1y_t\\
& =(\lambda_1x_t-(\lambda_1+n_1-1)x_1)y_t+((\lambda_1'+n_t+1)y_t-\lambda_1'y_1)x_1+(n_1-n_t-2)x_1y_t\\
& =(\lambda_1'-\lambda_1)x_1y_t+\lambda_1x_ty_t-\lambda_1'x_1y_1
\end{align*}

Suppose first that $t=2$. 
Note that 
\[
\lambda_1(x_2-x_1)=x_{u_0}+(n_1-1)x_1-x_{v_0}-(n_2-1)x_2
\]
and
\[
(\lambda_1-1)(x_{u_0}-x_{v_0})=(n_2-1)x_2-(n_1-1)x_1.
\]
As $n_1-n_2\ge 2$, $\Gamma$ contains $\Gamma_{4,2}$ as an induced subgraph, and so $\lambda_1\ge \lambda_1(\Gamma_{4,2})=2.4495>2$. 
Since $(n_1-2)\lambda_1-2(n_1-1)-(n_2-2)\lambda_1+2(n_2-1)=(\lambda_1-2)(n_1-n_2)>0$, we have 
\[
\left(\lambda_1^2+(n_2-2)\lambda_1-2(n_2-1)\right)x_2=\left(\lambda_1^2+(n_1-2)\lambda_1-2(n_1-1)\right)x_1,  
\]
so $x_2>x_1$. 
Note that 
\[
\lambda_1'(y_2-y_1)=y_{u_0}+(n_1-2)y_1-y_{v_0}-n_2y_2
\]
and
\[
\lambda_1'(y_{u_0}-y_{v_0})=n_2y_2-(n_1-2)y_1. 
\]
As $n_1-n_2\ge 2$, $\Gamma'$ contains $\Gamma_{3,3}$ as an induced subgraph, and so $\lambda_1'\ge \lambda_1(\Gamma_{3,3})=2.5616>2$. 
Since $(n_1-3)\lambda_1-2(n_1-2)-(n_2-1)\lambda_1+2n_2=(\lambda_1'-2)(n_1-n_2-2)\ge 0$, we have 
\[
\left(\lambda_1'^2+(n_2-1)\lambda_1'-2n_2\right)y_2=\left(\lambda_1'^2+(n_1-3)\lambda_1'-2(n_1-2)\right)y_1,  
\]
so $y_2\ge y_1$. Thus
\[
0 \ge\mathbf{y}^\top(\lambda_1'-\lambda_1)\mathbf{x}>(\lambda_1'-\lambda_1+\lambda_1-\lambda_1')x_1y_t=0, 
\]
a contradiction. 

Suppose next that $t\ge 3$. 
Note that 
\[
\lambda_1(x_t-x_1)=x_{u_0}+(n_1-1)x_1-n_tx_t
\]
and 
\[
\lambda_1'(y_t-y_1)=y_{u_0}+(n_1-2)y_1-(n_t+1)y_t.
\]
Then 
\[
x_t=\frac{\lambda_1+n_1-1}{\lambda_1+n_t}x_1+\frac{x_{u_0}}{\lambda_1+n_t}\ge \frac{\lambda_1+n_1-1}{\lambda_1+n_t}x_1\ge \frac{\lambda_1+n_t+1}{\lambda_1+n_t}x_1
\]
and 
\[
y_t=\frac{\lambda_1'+n_1-2}{\lambda_1'+n_t+1}y_1+\frac{y_{u_0}}{\lambda_1'+n_t+1}\ge \frac{\lambda_1'+n_1-2}{\lambda_1'+n_t+1}y_1\ge \frac{\lambda_1'+n_t}{\lambda_1'+n_t+1}y_1. 
\]
Thus
\begin{align*}
0& \ge\mathbf{y}^\top(\lambda_1'-\lambda_1)\mathbf{x}  \ge\left(\lambda_1'-\lambda_1+\frac{\lambda_1(\lambda_1+n_t+1)}{\lambda_1+n_t}-\frac{\lambda_1'(\lambda_1'+n_t+1)}{\lambda_1'+n_t}\right)x_1y_t\\
& =\frac{(\lambda_1-\lambda_1')}{(\lambda_1+n_t)(\lambda_1'+n_t)}x_1y_t
 \ge 0, 
\end{align*}
so $\lambda_1'=\lambda_1$, implying that $n_1=n_t+2$ and $x_{u_0}=y_{u_0}=0$. As $x_{u_0}=0$, we have 
$t=3$ and $n_2=n_3=1$ by Lemma \ref{nne},  so $n_1=3$. But $y_{u_0}=0$, contradicting Lemma \ref{nne}. 
\end{proof}

By Claim \ref{turan}, we have $\Gamma\simeq \Gamma_n(t)$, and $\lambda_1(\Gamma)=\lambda_1(\Gamma_n(t))$. 
\end{proof}

\section{Proof of Theorems \ref{N3}, \ref{N4}, \ref{N5} and \ref{N6}}


\begin{proof} [Proof of Theorem \ref{N3}]
Let $\Gamma=(G, \sigma)$ be an unbalanced signed $t$-partite graph with partite sizes $n_1\ge \dots, \ge n_t$ that minimizes the least eigenvalue. 
Let $n=\sum_{i=1}^{t}n_i$. 
By Lemma \ref{L+},  $\Gamma$ is switching equivalent to
a signed graph $\Gamma'=(G,\sigma')$  such that $\lambda_n(\Gamma')$ has a non-negative unit eigenvector $\mathbf{x}$. Assume that $V(G)=V(K_{n_1, \dots, n_t})$.

Since $K_{n_1, \dots, n_t}^-\in \boldsymbol\Gamma(n_1,\dots, n_t)$, we have $\lambda_n(K_{n_1, \dots, n_t}^-)\ge\lambda_n(\Gamma)$ by the choice of $\Gamma$. 
Let $E_1=E(K_{n_1, \dots, n_t})\setminus E(G)$, and 
$E_2=\{e\in E(G): \mbox{$e$ is positive in $\Gamma'$}\}$. 
Suppose that $E_1\cup E_2\ne \emptyset$. 
By Rayleigh's principle,  
\[
0\le \lambda_n(K_{n_1, \dots, n_t}^-)-\lambda_n(\Gamma')\le \mathbf{x}^\top\left(A(K_{n_1, \dots, n_t}^-)-A(\Gamma')\right)\mathbf{x}=-2\sum_{uv\in E_1}x_ux_v-4\sum_{uv\in E_2}x_ux_v\le 0,  
\]
so $\lambda_n(K_{n_1, \dots, n_t}^-)=\lambda_n(\Gamma)$, implying that $\mathbf{x}$ is also an eigenvector associated with $\lambda_n(K_{n_1,\dots,n_t}^-)$ with $x_u=0$ or $x_v=0$ for any $uv\in E_1\cup E_2$. Assume that $x_u=0$ and $u\in V_i$ for some $uv\in E_1\cup E_2$, where $1\le i\le t$. 
Then for any $w\in V_i\setminus\{u\}$, $x_w=0$ as $\lambda_n(K_{n_1,\dots,n_t}^-)x_w=\lambda_n(K_{n_1,\dots,n_t}^-)x_u$. 
Note that
\[
0=\lambda_n(K_{n_1,\dots,n_t}^-)x_u=-\sum_{j=1,j\ne i}^t\sum_{w\in V_j}x_w. 
\]
Then $x_w=0$ for any $w\in V_j$ with $1\le j\le t$ and $j\ne i$, so $\mathbf{x}=\mathbf{0}$, a contradiction. 
Therefore $E_1\cup E_2=\emptyset$, implying that $\Gamma\simeq K_{n_1, \dots, n_t}^-$, and $\lambda_n(\Gamma)=\lambda_n\left(K_{n_1, \dots, n_t}^-\right)$.
%
%
\end{proof}

\begin{proof}[Proof of Theorem~\ref{N5}] 
Let $\Gamma$ be an unbalanced signed $t$-partite graph with partite sizes $n_1\ge \dots\ge n_t$ maximizes the spectral radius. 
Note that $\rho(\Gamma)=\max\{\lambda_1(\Gamma), -\lambda_n(\Gamma)\}$. 

If $t=2$, then  
so  $\rho(\Gamma)=\lambda_1(\Gamma)=-\lambda_n(\Gamma)$,  so the result follows from Theorem \ref{N1} by noting that $\rho(\Gamma_{n_1,n_2})$ is the largest root of $\phi_{n_1,\dots,n_t}(x)=0$ with $\phi_{n_1,\dots,n_t}(x)=x^4-n_1n_2x^2+4(n_1-1)(n_2-1)$.

Suppose  that $t\ge 3$. 
By Theorems \ref{N1} and \ref{N3}, we have 
\[
\rho(\Gamma)=\max\{\lambda_1(\Gamma_{n_1, \dots,  n_t}), -\lambda_n(K_{n_1, \dots, n_t}^-)\}.
\] 
By Lemma \ref{under},  
\[
\lambda_1(\Gamma_{n_1, \dots, n_t})<\lambda_1(K_{n_1, \dots, n_t}^+)=-\lambda_n(K_{n_1, \dots, n_t}^-). 
\]
So $\rho(\Gamma)=-\lambda_n(K_{n_1, \dots, n_t}^-)$. Now by Theorem \ref{N3}, $\Gamma\simeq K_{n_1, \dots, n_t}^-$. From \cite{De,EH}, 
$\rho(K_{n_1, \dots, n_t}^-)=\lambda_1(K_{n_1, \dots, n_t}^+)$ is equal to the largest root of $\left(\sum_{i=1}^t\frac{n_i}{x+n_i}-1\right)\prod_{i=1}^t(x+n_i)=0$.
\end{proof}



\begin{proof}[Proof of Theorem \ref{N4}]
Let $\Gamma$ be an unbalanced signed $t$-partite signed graph of order $n$ that minimizes the least eigenvalue with partite sizes $n_1\ge \dots\ge n_t$. 
By Theorem \ref{N3}, $\lambda_n(\Gamma)=-\lambda_1(K_{n_1, \dots, n_t})$, and $\Gamma\simeq K_{n_1, \dots, n_t}^-$. 
Note that $\lambda_1(K_{n_1, \dots, n_t}^+)\le \lambda_1(T_{n, t}^+)$ with equality if and only if $K_{n_1, \dots, n_t}$ is isomorphic to $T_{n, t}$ by \cite{SGR}.
So $\lambda_n(\Gamma)\ge -\lambda_1(T_{n, t}^+)=\lambda_n(T_{n, t}^-)$. On the other hand, we have by the choice of $\Gamma$,  $\lambda_n(\Gamma)\le \lambda_n(T_{n, t}^-)$.
Thus $\lambda_n(\Gamma)=\lambda_n(T_{n, t}^-)$ and $\Gamma\simeq T_{n, t}^-$.
\end{proof}

\begin{proof}[Proof of Theorem~\ref{N6}]
Let $\Gamma$ be an unbalanced signed $t$-partite graph of order $n$ that maximizes the spectral radius. Note that $\rho(\Gamma)=\max\{\lambda_1(\Gamma), -\lambda_n(\Gamma)\}$. 

If $t=2$, then  $\rho(\Gamma)=\lambda_1(\Gamma)=-\lambda_n(\Gamma)$, so the result follows from Theorem \ref{N2}. 

Suppose  that $t\ge 3$. 
By Theorems \ref{N2} and \ref{N4}, we have \[
\rho(\Gamma)=\max\{\lambda_1(\Gamma_n(t)), -\lambda_n(T_{n, t}^-)\}.
\]
By Lemma \ref{under},  
\[
\lambda_1(\Gamma_n(t))<\lambda_1(T_{n, t}^+)=-\lambda_n(T_{n, t}^-). 
\]
So $\rho(\Gamma)=-\lambda_n(T_{n, t}^-)$, and by Theorem \ref{N4}, $\Gamma\simeq T_{n, t}^-$. 
\end{proof}

\bigskip

\noindent {\bf Acknowledgements.}
The authors thank Professor Dragan Stevanovi\'{c} for his very  helpful comments and suggestions on an early version. 
This work was supported by the National Natural Science Foundation of China (No.~12571364).


\begin{thebibliography}{99}

\bibitem{AL}  F.M. Atay, S. Liu, 
Cheeger constants, structural balance, and spectral clustering analysis for signed graphs,
Discrete Math. 343 (1) (2020)  111616, 26 pp.

\bibitem{BB} F. Belardo, M. Brunetti, Limit points for the spectral radii of signed graphs, Discrete Math. 347 (2) (2024) 113745.

\bibitem{BH}
A.E. Brouwer, W.H. Haemers, Spectra of Graphs,  Springer, New York, 2012.


\bibitem{BS}
M. Brunetti, Z. Stani\'c, Ordering signed graphs with large index,
Ars Math. Contemp. 22 (4) (2022)  14 pp.

\bibitem{CDG}
C.M. Conde,  E. Dratman, L.N. Grippo,
On the spectral radius of unbalanced signed bipartite graphs,
Discrete Math. 349 (2026) 114942.

\bibitem{De} C. Delorme, 
Eigenvalues of complete multipartite graphs, Discrete Math. 312 (2012) 2532--2535.

\bibitem{EH}
F. Esser, F. Harary, 
On the spectrum of a complete multipartite graph, 
Europ. J. Combin. 1 (1980) 211--218. 

\bibitem{Ha} F. Harary, 
On the notion of balance of a signed graph,
Michigan Math. J. 2 (1953/54)  143--146.

\bibitem{HK} F. Harary, J.A. Kabell, 
A simple algorithm to detect balance in signed graphs, 
Math. Social Sci. 1 (1) (1980/81) 131--136.  

\bibitem{HJ}
R. Horn, C. Johnson, Matrix Analysis, Second ed., 
Cambridge Univ. Press, Cambridge, 2013. 

\bibitem{Ni} V. Nikiforov, 
Bounds on graph eigenvalues. II,
Linear Algebra Appl. 427 (2--3) (2007) 183--189.

\bibitem{SL}
G. Sun, F. Liu, K. Lan,
A note on eigenvalues of signed graphs,
Linear Algebra Appl. 652 (2022) 125--131.


\bibitem{StanB} Z. Stani\'c, Spectra of Signed Graphs, 
London Math. Soc. Lecture Note Ser., 504, 
Cambridge University Press, Cambridge, 2026.

\bibitem{Stan}
Z. Stani\'c, Perturbations in a signed graph and its index, Discuss. Math. Graph Theory 38
(2018) 841--852.


\bibitem{St1}
Z. Stani\'c, Integral regular net-balanced signed graphs with vertex degree at most four, Ars Math. Contemp. 17 (2019) 103--114.

\bibitem{SGR} D. Stevanovi\'c, I.  Gutman,  M.U. Rehman, 
On spectral radius and energy of complete multipartite graphs,
Ars Math. Contemp. 9 (1) (2015) 109--113.


\bibitem{Z1}
T. Zaslavsky,
Signed graphs,
Discrete Appl. Math. 4 (1982) 47--74.


\bibitem{Z2} T. Zaslavsky, Biased graphs, I: bias, balance, and gains, J. Combin. Theory Ser. B 47 (1989) 32--52.



\end{thebibliography}
\end{document}